\newtheorem{theorem}{Theorem}
\newtheorem{lemma}{Lemma}
\newtheorem{statement}{Statement} 
\newtheorem{conjecture}{Conjecture}
\theoremstyle{definition}
\newtheorem{example}{Example} 
\newtheorem{definition}{Definition} 
\newtheorem{remark}{Remark}
\newcommand{\freeprod}{\mathop{\ast}\limits}
\def\R{{\mathbb R}}
\def\Z{{\mathbb Z}}
\title{\Huge \textbf{Word and Conjugacy Problems in Groups $G_{k+1}^{k}$}}
\author{\textsc{Denis A. Fedoseev\footnote{Lomonosov Moscow State University and Trapeznikov ICS RAS}, Andrey B. Karpov\footnote{Lomonosov Moscow State University},} \\ \textsc{Vassily O. Manturov}\footnote{Bauman Moscow State Technical University and Novosibirsk State University}}
\begin{document}

\maketitle

\begin{abstract}
Recently the third named author defined a 2-parametric family of groups $G_n^k$ \cite{gnk}. Those groups may be regarded as a certain generalisation of braid groups. Study of the connection between the groups $G_n^k$ and dynamical systems led to the discovery of the following fundamental principle: ``If dynamical systems describing the motion of $n$ particles possess a nice codimension one property governed by exactly $k$ particles, then these dynamical systems admit a topological invariant valued in $G_{n}^{k}$''.

The $G_n^k$ groups have connections to different algebraic structures, Coxeter groups and Kirillov-Fomin algebras, to name just a few. Study of the $G_n^k$ groups led to, in particular, the construction of invariants, valued in free products of cyclic groups.

In the present paper we prove that word and conjugacy problems for certain $G_{k+1}^k$ groups are algorithmically solvable.
\end{abstract}

\vspace{5mm}

{\small \it {\bf Keywords:} group, Howie diagram, invariant, manifold, braid, $G_n^k$ group, dynamical system, small cancellation, word problem, conjugacy problem} \\

{\textsc AMS MSC: 08A50, 57M05, 57M07, 20F36}

\section{Introduction}

The groups $G_{n}^{k}$ were introduced by the third named author into the study of configuration spaces, manifolds, and dynamical systems. 

In \cite{gnk}, the following principle was declared: 

\begin{center}
{\em If dynamical systems describing the motion of $n$ particles possess a nice codimension 1 property governed by exactly $k$ particles then they possess a topological invariant valued in $G_{n}^{k}$.} \end{center} 

The groups $G_{n}^{k}$ are given by a presentation with three types of relations: all generators are involutions, there is far commutativity relation and the triangle relation (see Definition \ref{def:gnk} below).

For $G_{n}^{k}$ groups are closely related to braid groups, it is natural to pose the word problem and the conjugacy problem for them. The connection of the groups $G_n^k$ with fundamental groups of manifolds, with Coxeter groups and other geometrical and algebraic structures was widely studied (see, for example, \cite{KimMan, Coxeter, HigherGnk, Man}, see also \cite{FedManCheng}, where braids with points were introduced, which are closely related to the $G_n^2$ group).

The class of groups for $n=k+1$ is especially interesting because of the absence of the far commutativity relation. The present paper is devoted to the study of groups $G_{k+1}^{k}$. \\

First, let us recall the definition of the {\em group $G_n^k$}, see \cite{gnk}.

\begin{definition}
The {\em group $G_{n}^{k}$} is defined as the quotient group of the free group generated by all $a_{m}$ for all multiindices $m$  by relations~\eqref{eq:gnk_tetrahedron_relation},~\eqref{eq:gnk_far_commutativity_relation} and \eqref{eq:gnk_order2_relation}, see below.
\label{def:gnk}
\end{definition}

Consider the following $n \choose k$ generators $a_{m}$, where $m$ runs the set of all unordered $k$-tuples $m_{1},\dots, m_{k},$ whereas each $m_{i}$ are pairwise distinct numbers from $\left\{1,\dots, n\right\}$.

For each $(k+1)$-tuple $U$ of indices $u_{1},\dots, u_{k+1} \in \{1,\dots, n\}$, consider the $k+1$ sets $m^{j}=U\setminus \{u_{j}\}, j=1,\dots, k+1$. With $U$, we associate the relation

\begin{equation}\label{eq:gnk_tetrahedron_relation}
a_{m^1}\cdot a_{m^2}\cdots a_{m^{k+1}}= a_{m^{k+1}}\cdots a_{m^2}\cdot a_{m^1}
\end{equation} 
for two tuples $U$ and ${\bar U}$, which differ by order reversal, we get the same relation.

Thus, we totally have
$\frac{(k+1)! {n \choose k+1}}{2}$ relations.

We shall call them the {\em tetrahedron relations}.

For $k$-tuples  $m,m'$ with $Card(m\cap m')<k-1$, consider the {\em far commutativity relation:}

\begin{equation}\label{eq:gnk_far_commutativity_relation}
a_{m}a_{m'}=a_{m'}a_{m}.
\end{equation}

Note that the far commutativity relation can occur only if $n>k+1$.

Besides that, for all multiindices $m$, we write down the following relation:

\begin{equation}\label{eq:gnk_order2_relation}
a_{m}^{2}=1.
\end{equation}

\begin{example}
The group $G_{3}^{2}$ is $\langle a,b,c\,|\,a^{2}=b^{2}=c^2=(abc)^{2}=1\rangle,$ where $a=a_{12},b=a_{13},c=a_{23}$.

Indeed, the relation $(abc)^{2}=1$ is equivalent to the relation $abc=cba=1$ because of the relations $a^{2}=b^{2}=c^{2}=1$. This obviously yields all the other tetrahedron relations.
\end{example}

\begin{example} 
The group $G_{4}^{3}$ is isomorphic to $\langle a,b,c,d\,|\,a^{2}=b^{2}=c^{2}=d^2=1,(abcd)^{2}=1,(acdb)^{2}=1,(adbc)^{2}=1\rangle$. Here $a=a_{123},b=a_{124},c=a_{134},d=a_{234}.$

It is easy to check that instead of $\frac{4!}{2}=12$ relations, it suffices to take only $\frac{3!}{2}$ relations.
\end{example}

In the present paper we concentrate on the study of the groups $G_{k+1}^k$. \\

The paper is organized as follows. In Section \ref{sec:realisation_n=k+1} we discuss the geometric interpretation of the groups $G_{k+1}^k$. In the third section we prove the solvability of the conjugacy problem in $G_4^3$. The last section is devoted to the study of the $G_5^4$ case. The solvability of the word problem in $G_5^4$ is proved. \\

The first named author was supported by the program ``Leading Scientific Schools'' (grant no. NSh-6399.2018.1, Agreement No. 075-02-2018-867) and by the Russian Foundation for Basic Research (grant No. 19-01-00775-a). The third named author was supported by the Laboratory of Topology and Dynamics, Novosibirsk State University (grant No. 14.Y26.31.0025 of the government of the Russian Federation). \\

The authors are grateful to A.A. Klyachko and I.M. Nikonov for useful discussions.

\section{Realisation of the groups $G_{k+1}^k$}
\label{sec:realisation_n=k+1}

In the present section we tackle the problem of geometric interpretation of the groups $G_n^k$ for the special case $n=k+1$.

More precisely, we shall prove the following
\begin{theorem}
There is a subgroup ${\tilde G}_{k+1}^{k}$ of the group $G_{k+1}^{k}$ of index $2^{k-1}$ which is isomorphic to $\pi_{1}({\tilde C}'_{k+1}(\R{}P^{k-1}))$, where the space ${\tilde C}'_{n}$ and the group ${\tilde G}_{k+1}^{k}$ will be defined later in this section.
\label{th0}
\end{theorem}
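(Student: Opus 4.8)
The plan is to realize $G_{k+1}^{k}$ through the motion of $k+1$ points in $\R P^{k-1}$ and to read off the group from the combinatorics of the associated discriminant. Lift each point $p_{j}$ to a vector $v_{j}\in\R^{k}\setminus\{0\}$ and, for each $i$, set $D_{i}=\det(v_{1},\dots,\widehat{v_{i}},\dots,v_{k+1})$; the vanishing $D_{i}=0$ is precisely the codimension $1$ condition that the $k$ points other than $p_{i}$ lie on a common hyperplane $\R P^{k-2}$, and I would take these $k+1$ hypersurfaces to be the walls. The generic configurations (no $k$ points hyperplanar) form the top stratum, and the space $\tilde C'_{k+1}(\R P^{k-1})$ should be the natural covering of this stratum on which the signs of the $D_{i}$ are coherently defined. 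First I would construct a homomorphism $\psi\colon\pi_{1}(\tilde C'_{k+1}(\R P^{k-1}))\to G_{k+1}^{k}$ by recording, along a loop, the sequence of walls crossed, sending a transverse crossing of the wall $\{D_{i}=0\}$ to the generator $a_{m^{i}}$. This is the precise form of the invariant principle quoted in the introduction.

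Next I would extract a presentation of $\pi_{1}(\tilde C'_{k+1})$ from the stratification and compare it with that of $G_{k+1}^{k}$. By general position the walls $\{D_{i}=0\}$ meet transversally, so van Kampen's theorem (applied to the top stratum together with the links of the deeper strata) expresses $\pi_{1}$ on meridian generators, one per wall, with relations dictated by the codimension $2$ strata. The key stratum is the one where \emph{all} $k+1$ points lie on a single hyperplane: there every $D_{i}$ vanishes, so all $k+1$ walls pass through it, and the monodromy around its two-dimensional normal slice---where the walls appear in the cyclic angular order $1,2,\dots,k+1$---produces exactly the tetrahedron relation \eqref{eq:gnk_tetrahedron_relation}. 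The relation $a_{m}^{2}=1$ should in turn be forced by the covering structure defining $\tilde C'$, a single crossing followed by its return closing up to a contractible loop only after passing to the appropriate sheet. Matching these generators and relations identifies the image of $\psi$ with the subgroup $\tilde G_{k+1}^{k}$ and shows that $\psi$ is an isomorphism onto it.

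Finally the index is a covering-degree computation. Changing the lift $v_{j}\mapsto -v_{j}$ flips the sign of every $D_{i}$ with $i\ne j$, so the $2^{k+1}$ choices of lifts act on the sign data through the subgroup they generate; after dividing out the global sign and the one further normalization built into $\tilde C'$, the deck group of $\tilde C'_{k+1}\to C_{k+1}$ becomes $(\Z/2)^{k-1}$, which under $\psi$ matches the index $[\,G_{k+1}^{k}:\tilde G_{k+1}^{k}\,]=2^{k-1}$ read off from the abelianization of $G_{k+1}^{k}$. I expect the main obstacle to lie in the middle step: rigorously computing $\pi_{1}$ of the singular, stratified configuration space---verifying that the links of the codimension $2$ strata contribute precisely the tetrahedron relations and nothing more, and that the covering indeed forces the involutions---and pinning down exactly which index $2^{k-1}$ subgroup arises, since the sign and orientation bookkeeping that defines both $\tilde C'$ and $\tilde G$ is where the delicate matching happens.
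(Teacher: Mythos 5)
Your first half reproduces the paper's strategy: the homomorphism $\psi$ recording transverse wall crossings is exactly the map $f$ of Theorem~\ref{thgn3} (imported from \cite{HigherGnk}), the tetrahedron relation does come from the codimension-two stratum where all $k+1$ points fall into a single hyperplane, and the index $2^{k-1}$ does come from sign bookkeeping on the free point. But two things go wrong. First, your guessed definition of $\tilde C'_{k+1}$ is not the paper's: it is not a sign covering of the top stratum but the subspace of $C'_{k+1}(\R P^{k-1})$ in which the first $k-1$ points are pinned at the coordinate points $y_i$. The walls $\{D_i=0\}$ are \emph{contained} in this space (only degeneracies of $(k-1)$-element subsets are excluded), so $a_m^2=1$ is simply the statement that a crossing immediately undone bounds a disc, not a covering phenomenon; the number $2^{k-1}$ enters as the number of sign chambers for the last free point, which serve as base points, and $\tilde G_{k+1}^k$ is defined as the set of words whose associated paths return to the starting base point (Lemma~\ref{governed}).

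Second, and more seriously, the step you yourself flag as the obstacle is the entire content of the theorem, and the stratified van Kampen computation you propose is not carried out and would be delicate to justify here: the space is not a hyperplane-arrangement complement, and you would have to show that the links of the codimension-two strata contribute all $(k+1)!/2$ tetrahedron relations and nothing else. The paper avoids this by building an explicit section $g$ of $f$: Lemma~\ref{lm2} produces, for each generator, a path with a single transverse singular moment, unique up to homotopy in its class; Lemma~\ref{quadrisec} realizes each tetrahedron relation by a homotopy of concatenated paths; and the key injectivity input is Lemma~\ref{lmA} --- a loop whose associated word is empty is null-homotopic, because once the signs of the coordinate differences of the two unpinned points are fixed along the loop, their configuration space is simply connected. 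Without an analogue of this last lemma your argument cannot exclude extra relations in $\pi_1(\tilde C'_{k+1}(\R P^{k-1}))$, so the isomorphism claim remains unproved.
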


The simplest case of the above Theorem is

\begin{theorem}
The group ${\tilde G}_{4}^{3}$ (which is a finite index subgroup of $G_4^3$) is isomorphic to $\pi_{1}(FBr_{4}(\R P^{2}))$, the $4$-strand braid group on $\R{}P^{2}$ with two points fixed.\label{th1}
\end{theorem}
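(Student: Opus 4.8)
The plan is to prove Theorem \ref{th1} as the concrete base case of the general Theorem \ref{th0}, specialising the correspondence between $\tilde{G}_{k+1}^k$ and the fundamental group of a configuration/braid space to $k=3$. First I would make precise the space $FBr_4(\R P^2)$: it is the fundamental group of the configuration space of four points on $\R P^2$ with two of them held fixed, i.e. $\pi_1$ of the space of $4$-point configurations modulo the appropriate symmetry, with a basepoint corresponding to four chosen points in general position. Having fixed two points reduces the ambient symmetry and is exactly what accounts for the index $2^{k-1}=2^{2}=4$ of $\tilde G_4^3$ inside $G_4^3$; I would spell out that bookkeeping explicitly so the finite-index claim is transparent.

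The core of the argument is to exhibit a presentation of $\pi_1(FBr_4(\R P^2))$ by generators and relations and match it term-by-term with the presentation of $\tilde G_4^3$. The generators $a=a_{123},b=a_{124},c=a_{134},d=a_{234}$ should correspond to the elementary moves in which a moving point passes through the codimension-one stratum governed by exactly $k=3$ points — geometrically, the events where three of the four points become collinear (lie on a common projective line) in $\R P^2$. I would set up this dictionary carefully: each generator $a_m$ is the homotopy class of a loop that realises a single ``three-points-on-a-line'' coincidence indexed by the triple $m$, and the involution relation $a_m^2=1$ reflects that such a crossing, performed twice, is null-homotopic in $\R P^2$ (here the topology of $\R P^2$, in particular its fundamental group $\Z/2$, does the work). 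The tetrahedron relations $(abcd)^2=(acdb)^2=(adbc)^2=1$ should then arise from the codimension-two strata — the genuinely degenerate configurations of four points — each giving a relation among the loops encircling it, computed via a standard van Kampen / fibration-sequence analysis of the configuration space.

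Concretely, the technical engine is a fibration of the (partially fixed) configuration space obtained by forgetting the last moving point, yielding a long exact sequence of homotopy groups; iterating this lets me build the presentation of $\pi_1(FBr_4(\R P^2))$ inductively from the known $\pi_1(\R P^2)=\Z/2$ and the braid-type relations of the fibres. Then I would verify that the resulting generators and relations are precisely those defining $\tilde G_4^3$, giving a surjection, and check injectivity by confirming the relation count matches (the paper already notes that for $G_4^3$ only $\tfrac{3!}{2}=3$ tetrahedron relations are needed). I expect the main obstacle to be the geometric identification of the relations: verifying that a small loop around each codimension-two degenerate stratum translates into exactly one tetrahedron relation, with the correct cyclic ordering of $a,b,c,d$, and that no extra relations are hidden in the $\R P^2$ topology. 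Getting the orientation/ordering conventions consistent — so that the three inequivalent cyclic words $abcd$, $acdb$, $adbc$ appear and nothing more — is the delicate step, and it is where the fibration computation and the combinatorics of which triples of points can simultaneously degenerate must be reconciled.
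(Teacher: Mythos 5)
Your strategy founders on a misidentification of the object to be described. The presentation $\langle a,b,c,d \mid a^2=b^2=c^2=d^2=(abcd)^2=(acdb)^2=(adbc)^2=1\rangle$ is a presentation of $G_4^3$, not of the index-$4$ subgroup ${\tilde G}_4^3$, and the individual generators $a_m$ do \emph{not} lie in ${\tilde G}_4^3$: geometrically, a single ``three points on a line'' event is a path between two \emph{different} basepoints of ${\tilde C}'_4(\R P^2)$ (the $2^{k-1}=4$ basepoints indexed by sign vectors, cf.\ Lemma \ref{governed}), not a loop. So your central dictionary --- ``each generator $a_m$ is the homotopy class of a loop realising one collinearity event'' --- cannot be set up in $\pi_1(FBr_4(\R P^2))$, and a term-by-term match of that presentation against a presentation of the fundamental group would be attempting to prove the wrong statement. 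Relatedly, you attribute the index $4$ to the fixing of two points; in the paper the two fixed points define the ambient space ${\tilde C}'_4$ itself, while the index $2^{k-1}$ comes from the chamber (sign) bookkeeping of the moving points, i.e.\ from which words give \emph{closed} paths. A repair along your lines would require first computing a presentation of the subgroup ${\tilde G}_4^3$ (e.g.\ by Reidemeister--Schreier), which you do not do and which is substantially harder than quoting the defining relations of $G_4^3$. A smaller but real error: $a_m^2=1$ is realised by crossing a wall and crossing straight back, a cancellation that has nothing to do with $\pi_1(\R P^2)=\Z/2$.

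The second gap is the injectivity step: ``confirming the relation count matches'' proves nothing for infinite groups --- a surjection between groups given by presentations with equally many relations can easily have nontrivial kernel. The paper instead builds explicit mutually inverse maps: $f$ reads off the word from the wall crossings of a braid, $g$ realises each letter by a standard path between basepoints (using transitivity of $PGL(k,\R)$ on generic tuples, Theorem \ref{wulem}, and Lemma \ref{lm2}), so that $f\circ g=\mathrm{id}$ gives surjectivity; triviality of $\ker f$ is then proved by showing that every relation of $G_4^3$ applied to the word can be followed by a homotopy of the loop (Lemmas \ref{lm2} and \ref{quadrisec}) and that a loop whose word is empty is null-homotopic (Lemma \ref{lmA}, which uses the simple connectivity of the chamber in which the two moving points live). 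Your Fadell--Neuwirth fibration computation of $\pi_1$ of the configuration space is a legitimate alternative input, but without the subgroup presentation and without an actual argument that no relations are missing, the proposal does not close.
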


Theorem \ref{th1} gives a solution to the word problem in the group $G_4^3$.

\subsection{Necessary definitions}

In the present section we introduce all the necessary notions. First, we give the definition of the {\em restricted configuration spaces} $C'_{n}(\R{}P^{k-1})$ and maps from the corresponding fundamental groups to the groups $G_{n}^{k}$.

Let us fix a pair of natural number $n>k$. A point in $C'_{n}(\R{}P^{k-1})$ is an ordered set of $n$ pairwise distinct points in  $\R{}P^{k-1}$, such that any $(k-1)$ of them are in general position. Thus, for instance, if $k=3$, then the only condition is that these points are pairwise distinct. For $k=4$ for points $x_{1},\dots, x_{n}$ in $\R{}P^{3}$ we impose the condition that no three of them belong to the same line (though some four are allowed to belong to the same plane), and for  $k=5$ a point in $C'_{n}(\R{}P^{4})$ is a set of ordered $n$ points in  $\R{}P^{4}$, with no four of them belonging to the same $2$-plane.

Let us use the projective coordinates $(a_{1}:a_{2}:\cdots: a_{k})$ in $\R{}P^{k-1}$ and let us fix the following  $k-1$ points in general position, $y_{1},y_{2}, \cdots, y_{k-1}\in \R{}P^{k-1}$, where $a_{i}(y_{j})=\delta_{i}^{j}$. Let us define the subspace ${\tilde C}'_{n}(\R{}P^{k-1})$ taking those $n$-tuples of points $x_{1},\cdots, x_{n}\in \R{}P^{k-1}$ for which $x_{i}=y_{i}$ for $i=1,\cdots, k-1$.

We say that a point $x\in C'_{n}(\R^{k-1})$  is {\em singular}, if the set of points $x=(x_{1},\dots, x_{n})$, representing $x$, contains some subset of $k$ points lying on the same $(k-2)$-plane. Let us fix two  non-singular points $x,x'\in C'_{n}(\R{}P^{k-1}).$

We shall consider smooth paths $\gamma_{x,x'}: [0,1]\to C'_{n}(\R^{k-1})$.
For each such path there are values $t$ such that for $\gamma_{x,x'}(t)$ some $k$ of the points belong to the same $(k-2)$-plane. We call these values $t\in [0,1]$ {\em singular}.

On the path $\gamma$, we say that the moment $t$ of passing through the singular point $x$, corresponding to the set  $x_{i_1},\cdots, x_{i_k}$, is {\em transverse} (or stable) if for any sufficiently small perturbation ${\tilde \gamma}$ of the path  $\gamma$, in the neighbourhood of the moment $t$ there exists exactly one time moment $t'$ corresponding to some set of points $x_{i_1},\cdots, x_{i_k}$ non in general position.

\begin{definition}
We say that a path is {\em good and stable} if the following holds:

\begin{enumerate}

\item
The set of singular values $t$ is finite;

\item For every singular value  $t=t_{l}$ corresponding to $n$ points representing $\gamma_{x,x'}(t_{l})$, there exists only one subset of $k$ points belonging to a $(k-2)$-plane;

\item Each singular value is  {\em stable}.

\end{enumerate}
\end{definition}

\begin{definition}
We say that the path without singular values is {\em void}.
\end{definition}

We shall call such paths {\em braids}. To be precise, we give the following definition.

\begin{definition}
A path from $x$ to $x'$ is called a {\em braid} if the points representing $x$ are the same as those representing $x'$ (possibly, in different orders); if $x$ coincides with $x'$ with respect to order, then such a braid is called {\em pure}.
\end{definition}

We say that two  braids $\gamma,\gamma'$ with endpoints $x,x'$ are {\em isotopic} if there exists a continuous family  $\gamma^{s}_{x,x'},s \in [0,1]$ of smooth paths with fixed ends such that  $\gamma^{0}_{x,x'}=\gamma,\gamma^{1}_{x,x'}=\gamma'$. By a small perturbation, any path can be made good and stable (if endpoints are generic, we may also require that the endpoints remain fixed).

There is an obvious concatenation structure on the set of braids: for paths $\gamma_{x,x'}$ and $\gamma'_{x',x''}$, the concatenation is defined as a path $\gamma''_{x,x''}$ such that $\gamma''(t)=\gamma(2t)$ for $t\in [0,\frac{1}{2}]$ and $\gamma''(t)=\gamma'(2t-1)$ for $t\in [\frac{1}{2},1]$; this path can be smoothed in the neighbourhood of  $t=\frac{1}{2}$; the result of such smoothing is well defined up to isotopy.

Thus, the sets of braids and pure braids (for a fixed $x$) admit a group structure. This latter group is certainly isomorphic to the fundamental group  $\pi_{1}(C'_{n}(\R^{k-1}))$. The former group is isomorphic to the fundamental group of the quotient space by the action of the permutation group.

\subsection{The realisability of $G_{k+1}^k$}

The main idea of the proof of Theorem \ref{th0} is to associate with every word in $G_{k+1}^{k}$ a braid in ${\tilde C}'_{k+1}(\R{}P^{k-1})$.

Let us start with the main construction from \cite{HigherGnk}.

With each good and stable braid from $PB_{n}(\R{}P^{2})$ we associate an element of the group  $G_{n}^{k}$ as follows. We enumerate all singular values of our path $0<t_{1}<\dots <t_{l}<1$ (we assume than  $0$ and $1$ are not singular). For each singular value $t_{p}$ we have a set $m_{p}$ of $k$ indices corresponding to the numbers of points which are not in general position. With this value we associate the letter  $a_{m_{p}}$. With the whole path  $\gamma$ (braid) we associate the product $f(\gamma)=a_{m_{1}}\cdots a_{m_{l}}$.

\begin{theorem}\cite{HigherGnk}
The map   $f$ takes isotopic braids to equal elements of the group  $G_{n}^{k}$. For pure braids, the map   $f$ is a homomorphism  $f:\pi_{1}(C'_{n}(\R{}P^{2}))\to G_{n}^{3}$.
\label{thgn3}
\end{theorem}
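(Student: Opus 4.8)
The plan is to interpret $f$ as reading off the sequence of codimension-one degeneracies encountered along a generic path, and then to show that $f$ is invariant under the elementary moves that generate isotopy, each such move being accounted for by one of the defining relations of $G_n^k$. Concretely, I would first stratify the configuration space $C'_n(\R P^{k-1})$ by the discriminant $\Sigma$, the set of configurations in which some $k$ of the points lie on a common $(k-2)$-plane (for $k=3$, three points collinear in $\R P^2$). A good and stable path is precisely one that meets $\Sigma$ transversally, at finitely many interior times, away from the deeper strata; the letters $a_{m_p}$ record these transverse crossings, so $f(\gamma)$ is well defined for such a path.

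For isotopy invariance I would argue that a homotopy $\gamma^{s}$ between two good and stable paths gives a two-parameter family which, after a small perturbation, may be assumed transverse to the natural stratification of $\Sigma$ by codimension. The word $f(\gamma^{s})$ then stays constant except when $s$ passes a value at which the family crosses a codimension-two stratum exactly once. The heart of the proof is therefore the classification of these codimension-two events and the verification that each changes the recorded word by a defining relation. I expect three types. First, a tangency of the path to the smooth part of $\Sigma$, at which a pair of crossings with the same index set $m$ is created or destroyed; this contributes a factor $a_m a_m$ and is absorbed by the involution relation \eqref{eq:gnk_order2_relation}. Second, a transverse self-intersection of $\Sigma$ at which two independent degeneracies with index sets $m,m'$ satisfying $Card(m\cap m')<k-1$ occur simultaneously; crossing it interchanges the two neighbouring letters and is absorbed by far commutativity \eqref{eq:gnk_far_commutativity_relation}. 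Third, and most importantly, the stratum at which $k+1$ of the points become degenerate together (for $k=3$, four points collinear, which one checks is indeed codimension two), where the $k+1$ letters $a_{m^1},\dots,a_{m^{k+1}}$ all appear in a short time window and the crossing reverses their order; this is exactly the tetrahedron relation \eqref{eq:gnk_tetrahedron_relation}.

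The main obstacle is this last case: I must produce a local normal form for a generic two-parameter family near the $(k+1)$-fold degeneracy and show that the $k+1$ branches of $\Sigma$ meet at the origin of the base in such a way that moving across the stratum reverses the linear order of the corresponding times \emph{completely}, yielding $a_{m^1}\cdots a_{m^{k+1}} = a_{m^{k+1}}\cdots a_{m^1}$ rather than some other permutation. This is where the specific geometry of $k$ points on a $(k-2)$-plane is essential, and where the transversality and general-position reductions must be carried out with care; in particular one has to check that codimension-three phenomena can be avoided along the homotopy, so that the moves above are genuinely independent and occur one at a time.

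Finally, the homomorphism statement is comparatively routine. Concatenation of paths juxtaposes their singular values in order, so at the level of words $f(\gamma''_{x,x''}) = f(\gamma_{x,x'})\,f(\gamma'_{x',x''})$; since pure braids under concatenation form the group $\pi_{1}(C'_n(\R P^2))$ and $f$ has already been shown to descend to isotopy classes, $f$ is a well-defined homomorphism into $G_n^3$. The only thing to verify is that the smoothing at the concatenation time introduces no new singular values up to isotopy, which follows from the general-position argument above.
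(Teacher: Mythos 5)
Note first that the paper you are working from does not prove this theorem at all: it is imported verbatim from \cite{HigherGnk}, and your stratification argument is essentially the approach of that reference, so in outline your proposal is correct and standard. The one place your classification of codimension-two events is genuinely incomplete is the case of two simultaneous degeneracies with $Card(m\cap m')=k-1$: you list only $Card(m\cap m')<k-1$ (far commutativity) and the full $(k+1)$-fold degeneracy, and you need the geometric observation that if two $k$-element subsets sharing $k-1$ indices each lie on a $(k-2)$-plane, then (since $k-1$ points of the restricted configuration space are in general position and hence span a unique $(k-2)$-plane) the two planes coincide and all $k+1$ points are forced onto it --- so this case collapses into your third stratum rather than producing a forbidden commutation $a_m a_{m'}=a_{m'}a_m$. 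Your ``main obstacle'' is also less delicate than you fear: in a two-dimensional transversal slice the $k+1$ walls are curves through the origin, a nearby arc crosses them in an angular order that is exactly reversed on the other side, and since $G_n^k$ imposes relation \eqref{eq:gnk_tetrahedron_relation} for \emph{every} ordering of the tuple $U$, whichever order the geometry produces is absorbed; what does require checking is that the walls are smooth and pairwise non-tangent along the deep stratum.
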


Now we claim that \\

{\em Every word from  $G_{k+1}^{k}$ can be realised by a path of the above form.} \\

Note that if we replace $\R{}P^{k-1}$ with $\R^{k-1}$, the corresponding statement will fail. Namely, starting with the configuration of four points, $x_{i},i=1,\cdots, 4$, where $x_{1},x_{2},x_{3}$ form a triangle and $x_{4}$ lies inside this triangle, we see that any path starting from this configuration will lead to a word starting from $a_{124},a_{134}, a_{234}$ but not from $a_{123}$. In some sense the point $4$ is ``locked'' and the points are not in the same position.

\begin{figure}
\centering\includegraphics[width=100pt]{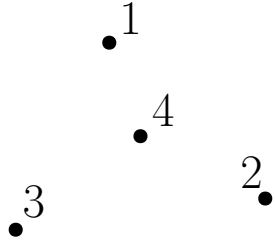}
\caption{The ``locked'' position for the move $a_{123}$}
\label{locked}
\end{figure}

The following well known theorem (see, e.g., \cite{Wu}) plays a crucial role in the construction
\begin{theorem}
For any two sets of $k+1$ points in general position in $\R{}P^{k-1}$, $(x_{1},\cdots, x_{k+1})$ and $(y_{1},\cdots, y_{k+1})$ there is an action of $PGL(k, \R)$ taking all $x_{i}$ to $y_{i}$.\label{wulem}
\end{theorem}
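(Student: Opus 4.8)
The plan is to reduce the projective statement to a normalization problem in linear algebra by lifting the two configurations to $\R^{k}$, rescaling representatives appropriately, and then exhibiting the transformation explicitly. First I would choose nonzero vectors $\tilde x_{1},\dots,\tilde x_{k+1}\in\R^{k}$ representing the points $x_{1},\dots,x_{k+1}\in\R P^{k-1}$. The general position hypothesis says precisely that no $k$ of the points lie on a common hyperplane of $\R P^{k-1}$, i.e. any $k$ of the lifts are linearly independent. In particular $\tilde x_{1},\dots,\tilde x_{k}$ form a basis of $\R^{k}$, so I may write $\tilde x_{k+1}=\sum_{i=1}^{k}c_{i}\tilde x_{i}$.

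The key observation is that every coefficient $c_{i}$ is nonzero. Indeed, if $c_{j}=0$ for some $j$, then $\tilde x_{k+1}$ would lie in the span of the $k-1$ vectors $\{\tilde x_{i}:i\neq j,\,1\le i\le k\}$, so the $k$ vectors $\tilde x_{k+1}$ together with $\{\tilde x_{i}:i\neq j,\,1\le i\le k\}$ would be linearly dependent, contradicting general position. This nonvanishing lets me rescale the lifts: replacing $\tilde x_{i}$ by $v_{i}:=c_{i}\tilde x_{i}$ for $i\le k$ yields a basis $v_{1},\dots,v_{k}$ in which $v_{i}$ still represents $x_{i}$ and $\tilde x_{k+1}=v_{1}+\cdots+v_{k}$. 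Thus any frame of $k+1$ points in general position can be normalized so that the last point is represented by the sum of the representatives of the first $k$.

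Performing the identical normalization on the target configuration produces a basis $w_{1},\dots,w_{k}$ of $\R^{k}$ with $w_{i}$ representing $y_{i}$ and $w_{1}+\cdots+w_{k}$ representing $y_{k+1}$. I then define $A\in GL(k,\R)$ to be the unique linear isomorphism determined by $Av_{i}=w_{i}$ for $i=1,\dots,k$; it is invertible because it carries one basis to another. By construction $A$ sends $x_{i}$ to $y_{i}$ for each $i\le k$, and since $A(v_{1}+\cdots+v_{k})=w_{1}+\cdots+w_{k}$, it also sends $x_{k+1}$ to $y_{k+1}$. The image of $A$ in $PGL(k,\R)$ is then the required projective transformation.

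I do not expect a serious obstacle here, since this is the classical fundamental theorem on projective frames; the only step demanding care is the nonvanishing of the coefficients $c_{i}$, which is exactly the point at which the general-position hypothesis enters and without which the rescaling, and hence the whole construction, would fail.
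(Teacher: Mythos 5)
Your proof is correct. Note, however, that the paper supplies no proof of this statement at all: it is quoted as a well-known theorem with a citation to the literature (the Berrick--Cohen--Wong--Wu reference), so there is no in-paper argument to compare against. What you have written is the classical proof of the fundamental theorem on projective frames: lift the points to nonzero vectors in $\R^{k}$, use general position to see that the first $k$ lifts form a basis and that the coordinates $c_{i}$ of the last lift in that basis are all nonzero, rescale so that the last lift becomes the sum of the first $k$, do the same for the target configuration, and send basis to basis. The one step that genuinely needs the hypothesis is exactly the one you isolate, namely the nonvanishing of the $c_{i}$, and your justification of it (if $c_{j}=0$ then $\tilde x_{k+1}$ together with the lifts $\tilde x_{i}$, $i\neq j$, $i\le k$, are $k$ vectors spanning at most a $(k-1)$-dimensional subspace) is right. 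You also correctly read ``general position'' for $k+1$ points in $\R P^{k-1}$ as the projective-frame condition that any $k$ of the lifts are linearly independent, which is the hypothesis under which the theorem is true and the sense in which the paper uses it. Your argument would serve as a self-contained replacement for the paper's external citation.
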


For us, this  will mean that there is no difference between all possible ``non-degenerate starting positions'' for $k+1$ points in $\R{}P^{k}$.

We shall deal with paths in ${\tilde C'}_{k+1}(\R{}P^{k-1})$ similar to braids. Namely, we shall fix a set of $2^{k-1}$ points such that all paths will start and end at these points.

We shall denote homogeneous coordinates in $\R{}P^{k-1}$ by $(a_{1}:\cdots: a_{k})$ in contrast to points (which we denote by $(x_{1},\cdots, x_{k+1})$).

\subsection{Constructing a braid from a word in $G_{k+1}^{k}$}

Our main goal is to construct a braid by a word. To this end, we need a base point for the braid. For the sake of convenience, we shall use not one, but rather $2^{k-1}$ reference points. For the first $k$ points $y_{1}=(1:0:\cdots:0),\cdots, y_{k}=(0:\cdots:0:1)$ fixed, we will have $2^{k-1}$ possibilities for the choice of the last point. Namely, let us consider all possible strings  of length $k$ of $\pm 1$ with the last coordinate $+1$: $$(1,1,\cdots, 1,1),(1,\cdots, 1,-1,1),\cdots, (-1,-1,\cdots, -1,1)$$ with $a_{k} = +1$. We shall denote these points by $y_{s}$ where $s$ records the first $(k-1)$ coordinates of the point.

Now, for each string $s$ of length $k$ of $\pm 1$, we set $z_{s}=(y_{1},y_{2},\cdots, y_{k},y_{s})$.

The following lemma is evident.
\begin{lemma}
For every point $z\in \R{}P^{k-1}$ with projective coordinates $(a_{1}(z):\cdots: a_{k-1}(z):1)$, let ${\tilde z}=(sign(a_{1}(z)):sign(a_{2}(z)): \cdots: sign(a_{k-1}(z)):1)$. Then there is a path between $(y_{1},\cdots, y_{k},z)$ and $(y_{1},\cdots, y_{k}, {\tilde z})$ in ${\tilde C}'_{k+1}(\R{}P^{k})$ with the first points $y_{1},\cdots, y_{k}$ fixed, and the corresponding path in ${\tilde C}'_{k+1}$ is void.
\label{lemmaB}
\end{lemma}

\begin{proof}
Indeed, it suffices just to connect $z$ to ${\tilde z}$ by a geodesic.
\end{proof}

From this we easily deduce the following
\begin{lemma}
Every  point $y\in {\tilde C}'_{k+1}(\R{}P^{k-1})$ can be connected by a void path to some $(y_{1},\cdots, y_{k}, y_{s})$ for some $s$.\label{lemBB}
\end{lemma}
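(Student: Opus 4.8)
The plan is to reduce the statement to Lemma~\ref{lemmaB} by first moving the $k$-th point to $y_k$ along a void path, and then invoking Lemma~\ref{lemmaB} to move the last point to one of the sign configurations. Take $y=(y_1,\dots,y_{k-1},x_k,x_{k+1})\in{\tilde C}'_{k+1}(\R{}P^{k-1})$, where $y_j=e_j$ are the fixed coordinate points. Since $y$ is non-singular, the set $\{y_1,\dots,y_{k-1},x_k\}$ is in general position; as $e_1,\dots,e_{k-1}$ span the hyperplane $\{a_k=0\}$, this forces $a_k(x_k)\neq 0$, so after rescaling I may write $x_k=(c_1:\cdots:c_{k-1}:1)$. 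The goal is to carry $x_k$ to $y_k=(0:\cdots:0:1)$ without ever creating a singular configuration.

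The crux is that a naive straight-line homotopy of $x_k$ alone (with $x_{k+1}$ held fixed) may well cross the singular locus, since the relevant $2\times 2$ minors in the coordinates $(j,k)$ can vanish at intermediate times. To avoid this I would move the \emph{whole} configuration by a one-parameter family of projective transformations that fixes the required points. Concretely, set $c=(c_1,\dots,c_{k-1})^{T}$ and consider
\[
A_t=\begin{pmatrix} I_{k-1} & -tc \\ 0 & 1 \end{pmatrix}\in PGL(k,\R),\qquad t\in[0,1].
\]
Each $A_t$ has determinant $1$, fixes $e_1,\dots,e_{k-1}$ as points, and satisfies $A_1 x_k=(c_1-c_1:\cdots:c_{k-1}-c_{k-1}:1)=y_k$. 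Define the path
\[
\gamma(t)=(y_1,\dots,y_{k-1},A_t x_k,A_t x_{k+1}).
\]
Because every $A_t$ is an invertible linear map of $\R^{k}$, it preserves linear independence of any collection of homogeneous coordinate vectors; hence it preserves both the ``any $k-1$ points in general position'' condition and the non-singularity condition (no $k$ points on a common $(k-2)$-plane). Since the first $k-1$ points are fixed at $y_1,\dots,y_{k-1}$ and the configuration at $t=0$ lies in ${\tilde C}'_{k+1}$, the whole path $\gamma$ stays in ${\tilde C}'_{k+1}$ and is void. This is exactly the place where working in $\R{}P^{k-1}$ rather than $\R^{k-1}$ is essential: the shear $A_t$ is a genuine projective transformation, and there is no ``locked'' obstruction as in the remark preceding Theorem~\ref{wulem}.

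At $t=1$ the path reaches $(y_1,\dots,y_{k-1},y_k,z)$ with $z=A_1 x_{k+1}$. Since $\gamma(1)$ is again in general position, the point $z$ together with $y_1,\dots,y_k$ is generic, which forces every coordinate $a_j(z)$ to be nonzero for $j=1,\dots,k-1$; after rescaling I may take $a_k(z)=1$. Lemma~\ref{lemmaB} then provides a void path from $(y_1,\dots,y_k,z)$ to $(y_1,\dots,y_k,{\tilde z})=z_s$, where $s$ is the sign vector of $z$. Concatenating the two void paths $\gamma$ and the geodesic of Lemma~\ref{lemmaB} yields a void path from $y$ to $z_s$, as required. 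I expect the only genuine point needing care to be the verification that $\gamma$ is void, which is handled cleanly by the invariance of general position under $PGL(k,\R)$; the remaining steps are routine coordinate bookkeeping.
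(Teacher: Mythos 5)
Your proof is correct and takes essentially the same route as the paper: a first stage using linear maps that fix the hyperplane $a_{k}=0$ to carry the $k$-th point to $(0:\cdots:0:1)$ while moving both of the last two points, followed by an application of Lemma~\ref{lemmaB}. Your explicit shear $A_{t}$ and the observation that invertible linear maps preserve general position merely flesh out what the paper's one-line proof leaves implicit.
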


\begin{proof}
Indeed, the void path can be constructed in two steps. At the first step, we construct a path which moves both $y_{k}$ and $y_{k+1}$, so that $y_{k}$ becomes $(0:\cdots 0:1)$, and at the second step, we use Lemma \ref{lemmaB}. To realise the first step, we just use linear maps which keep the hyperplane $a_{k}=0$ fixed.
\end{proof}

The lemma below shows that the path mentioned in Lemma \ref{lemBB} is unique up to homotopy.

\begin{lemma}
Let $\gamma$ be a closed path in ${\tilde C}'_{k+1}(\R{}P^{k-1})$ such that
the word $f(\gamma)$ is empty. Then $\gamma$ is homotopic to the trivial braid.
\label{lmA}
\end{lemma}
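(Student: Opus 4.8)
The plan is to show that the hypothesis ``$f(\gamma)$ is empty'' forces $\gamma$ to be \emph{void} and then to prove directly that every void loop based at one of the reference configurations $z_s$ contracts. The first point is almost tautological: the map $f$ reads off one \emph{positive} letter $a_{m_p}$ at each singular value and never introduces an inverse letter, so the literal word $f(\gamma)$ can be empty only when $\gamma$ has no singular values at all. Thus it suffices to prove that a void closed path lies in, and contracts inside, a contractible region of $\tilde C'_{k+1}(\R P^{k-1})$.

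First I would fix the frame $y_1,\dots,y_{k-1}$ to the standard basis points (this normalisation is built into $\tilde C'_{k+1}$ and is legitimate in view of Theorem~\ref{wulem}) and pass to the affine chart $a_k\neq 0$, writing the two moving points as $x_k=(u_1,\dots,u_{k-1},1)$ and $x_{k+1}=(p_1,\dots,p_{k-1},1)$. The key computation is to list the $k+1$ singular determinants, one for each $k$-subset of $\{1,\dots,k+1\}$, and to evaluate them in these coordinates. The two subsets containing exactly one moving point reduce to $a_k(x_k)\neq 0$ and $a_k(x_{k+1})\neq 0$; this says precisely that a void path keeps both moving points inside the chosen chart, since a point reaching the hyperplane $H=\{a_k=0\}$ spanned by the frame would be singular. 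Each of the remaining $k-1$ subsets, obtained by deleting one frame point $y_j$ and keeping both moving points, expands (after removing the standard-basis rows) to the $2\times 2$ minor $u_j-p_j$. Hence voidness is equivalent to the system of inequalities $u_j\neq p_j$ for all $j=1,\dots,k-1$.

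It follows that the void locus is the product $\prod_{j=1}^{k-1}\{(u_j,p_j):u_j\neq p_j\}$, that is, a disjoint union of $2^{k-1}$ \emph{convex} cells indexed by the signs $\mathrm{sign}(u_j-p_j)$; these are exactly the $2^{k-1}$ components distinguished by the reference points $z_s$. A void loop is connected and therefore stays in a single convex cell, so the straight-line homotopy contracts it to $z_s$ within the void locus; being a homotopy of paths with fixed endpoints inside $\tilde C'_{k+1}(\R P^{k-1})$, it exhibits $\gamma$ as homotopic to the trivial braid. The step needing the most care is the reduction to the affine chart: one must check both that voidness genuinely confines the configuration to $a_k\neq 0$ along all of $\gamma$ (so the coordinate computation is valid globally, not merely pointwise) and that every configuration in the convex cell still satisfies the ambient general-position condition defining $C'$, so that the contracting homotopy never leaves the space. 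This is also exactly where the projective topology enters, and it is resolved by the decomposition into the $2^{k-1}$ sign cells.
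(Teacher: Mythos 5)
Your proposal is correct and follows essentially the same route as the paper's own proof: fixing the frame $y_1,\dots,y_{k-1}$, passing to the affine chart $a_k=1$ (justified because voidness keeps $x_k,x_{k+1}$ off the hyperplane through the frame), observing that non-degeneracy forces $a_j(x_k)\neq a_j(x_{k+1})$ so the signs are locked, and contracting within the resulting simply connected (indeed convex) component. Your version merely makes explicit two points the paper leaves implicit --- that an empty word forces the path to be void, and that the sign cell lies entirely inside $C'_{k+1}$ so the straight-line homotopy stays in the space.
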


\begin{proof}

In ${\tilde C}'_{k+1}$, we deal with the motion of points, where all but $x_{k},x_{k+1}$ are fixed.

Consider the projective hyperplane ${\cal P}_{1}$ passing through $x_{1},\cdots, x_{k-1}$ given by the equation $a_{k}=0$.

We know that none of the points $x_{k}, x_{k+1}$ is allowed to belong to ${\cal P}_{1}$. Hence, we may fix the last coordinate $a_{k}(x_{k})=a_{k}(x_{k+1})=1$.

Now, we may pass to the affine coordinates of these two points (still to be denoted by $a_{1},\cdots, a_{k}$).

Now, the condition $\forall i=1,\cdots, k-1: a_{j}(x_{k})\neq a_{j}(x_{k+1})$ follows from the fact that the points $x_{1},\cdots, {\hat{x_{j} }}, \cdots, x_{k+1}$ are generic.

This means that $\forall i=1,\cdots, k-1$ the sign of  $a_{i}(x_{k})-a_{i}(x_{k+1})$ remains fixed.

Now, the motion of points $x_{k},x_{k+1}$ is determined by their coordinates $a_{1},\cdots, a_{j}$, and since their signs are fixed, the configuration space for this motions is simply connected.

This means that the loop $\gamma$ is described by a loop in a two-dimensional simply connected space.
\end{proof}

Our next strategy is as follows. Having a word in $G_{k+1}^{k}$, we shall associate with this word a path in ${\tilde C}'_{k+1}(\R{}P^{k-1})$. After each letter, we shall get to  $(y_{1},\cdots ,y_{k},y_{s})$ for some $s$.

Let us start from $s=(1,\cdots, 1)$, that is from the set $(y_{1},\cdots, y_{k},y_{1,\cdots, 1})$.

After making the final step, one can calculate the coordinate of the $(k+1)$-th points. They will be governed by Lemma \ref{governed} (see ahead). As we shall see later, those words we have to check for the solution of the word problem in $G_{k+1}^{k}$, will lead us to closed paths, i.e., pure braids.

Let us be more detailed.

\begin{lemma}
Let a non-singular set of points $y$ in $\R{}P^{k-1}$ be given. Then for every set of $k$ numbers $i_{1},i_{2},\cdots, i_{k}\in [n]$, there exists a path $y_{i_{1}\cdots i_{k}}(t)= y(t)$ in $C'_{n}(\R{}P^{k-1})$, having   $y(0)=y(1)=y$ as the starting point and the final point and with only one singular moment corresponding to the numbers $i_{1},\cdots, i_{k}$ which encode the points not in general position; moreover, we may assume that at this moment all points except $i_{1}$, are fixed during the path.

Moreover, the set of paths possessing this property is connected: any other path ${\tilde y}(t)$, possessing the above properties, is homotopic to $y(t)$ in this class.
 \label{lm2}
\end{lemma}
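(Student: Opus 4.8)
\emph{The plan.} Existence I would settle by an explicit construction, and the uniqueness clause (connectedness of the class of such paths) by a simple-connectivity argument of the same flavour as Lemma~\ref{lmA}.

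First I would construct the path. Keeping every point other than $x_{i_1}$ at its position in $y$, observe that the $k-1$ points $x_{i_2},\dots,x_{i_k}$ are in general position and hence span a unique projective hyperplane $H\cong\R P^{k-2}$; a singular moment for the index set $i_1,\dots,i_k$ is precisely an instant at which the moving point $x_{i_1}$ lands on $H$. The idea is to let $x_{i_1}$ run once along a projective line $\ell$ through its initial position: since $\dim\ell+\dim H=1+(k-2)=k-1$, a generic such $\ell$ meets $H$ in exactly one point $p$, transversally, and traversing $\ell\cong\R P^{1}$ once returns $x_{i_1}$ to its start while crossing $H$ a single time. Using the $PGL(k,\R)$-action of Theorem~\ref{wulem} I may first normalise $y$ so that $\ell$ and the crossing point $p$ sit in convenient position. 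The resulting loop $y(t)$ is closed, based at $y$, and yields the prescribed singular moment.

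The delicate point — and the one I expect to be the main obstacle — is to guarantee that this is the \emph{only} singular moment and that the path never leaves $C'_{n}(\R P^{k-1})$. Leaving the configuration space would force $x_{i_1}$ to fall, together with some $k-2$ of the remaining points, onto a $(k-3)$-plane; these loci have codimension two in $\R P^{k-1}$, so a generic choice of $\ell$ (and, if needed, a generic small perturbation) avoids them and keeps every $(k-1)$-subset in general position. The genuinely subtle issue is controlling the \emph{other} incidences of $x_{i_1}$ with the hyperplanes spanned by the remaining $(k-1)$-subsets of the fixed points: a single motion of $x_{i_1}$ alone may be forced to meet several of them, and one must arrange the path so that along it $x_{i_1}$ encounters only $H$ among the relevant hyperplanes, pushing any spurious crossing off the path by a small homotopy (permitting, where necessary, an auxiliary motion of the remaining points that returns them to $y$ and leaves them in place at the singular instant). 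I would verify this by a transversality and general-position argument, checking that the intermediate configurations violate none of the defining conditions of $C'_{n}$.

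Finally, for the uniqueness clause I would argue in the spirit of Lemma~\ref{lmA}. Given a second admissible path $\tilde y(t)$ with the same single crossing of $H$, form the concatenation of $y$ with the reverse of $\tilde y$; this is a loop meeting $H$ exactly twice and meeting no other relevant hyperplane, so the two crossings cancel under a homotopy that pulls the motion of $x_{i_1}$ back across $H$. After this cancellation the loop has empty associated word and its only moving point runs in the complement $\R P^{k-1}\setminus H\cong\R^{k-1}$, which is simply connected; hence, exactly as in the proof of Lemma~\ref{lmA}, the loop is null-homotopic. This produces a homotopy from $y(t)$ to $\tilde y(t)$ within the class, establishing the asserted connectedness.
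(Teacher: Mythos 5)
Your treatment of the connectedness clause is essentially the paper's own argument: the paper arranges the two paths to share their singular configuration at $t=\tfrac12$ and contracts the two resulting halves via Lemma~\ref{lmA}, while you concatenate the paths, cancel the two crossings, and then invoke the same simple-connectivity argument; these are the same proof. The existence clause, however, contains a genuine gap, and it is exactly at the point you flag as ``subtle'' but then propose to dispatch by transversality and small perturbation. If every point other than $x_{i_1}$ is held at its position in $y$, a loop of $x_{i_1}$ that crosses $H=\langle x_{i_2},\dots,x_{i_k}\rangle$ exactly once transversally represents the generator of $\pi_1(\R P^{k-1})\cong\Z_2$, and the mod-$2$ intersection number of such a loop with \emph{every} projective hyperplane is $1$ (all hyperplanes are $\Z_2$-homologous, and a projective line indeed meets each of them in one point). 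Hence your loop is forced to cross each of the other hyperplanes spanned by $(k-1)$-subsets of the fixed points an odd number of times, i.e.\ it necessarily acquires spurious singular moments. This is a topological obstruction, not a failure of general position: no choice of $\ell$ and no small perturbation of the motion of $x_{i_1}$ alone can remove these crossings, so the ``transversality and general-position argument'' you propose cannot close the construction.

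The auxiliary motion of the remaining points, which you admit only parenthetically, is therefore not an optional convenience but the essential content of the existence proof: one must move other points so that the offending hyperplanes recede as $x_{i_1}$ passes (and verify that this extra motion creates no new degeneracies), and none of that is carried out in your sketch. To be fair, the paper's own proof of existence is laconic --- ``construct a path for some initial position and apply Theorem~\ref{wulem}'' --- but it never commits to keeping all points other than $x_{i_1}$ fixed throughout; the remark following Lemma~\ref{lm2} only asserts that the path can be chosen inside ${\tilde C}'$, where two points are free to move. Your write-up, by building the whole construction on a single moving point and presenting the spurious crossings as perturbable away, rests on a step that provably fails.
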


\begin{proof}
Indeed, for the first statement of the Lemma, it suffices to construct a path for some initial position of points and then apply Theorem \ref{wulem}.

For the second statement, let us take two different paths $\gamma_{1}$ and $\gamma_{2}$ satisfying the conditions of the Lemma. By a small perturbation, we may assume that for both of them, $t=\frac{1}{2}$ is a singular moment with the same position of $y_{i_{1}}$.

 Now, we can contract the loop formed by $\gamma_{1}|_{t\in [\frac{1}{2},1]}$ and the inverse of $\gamma_{2}|_{t\in [\frac{1}{2},1]}$ by using Lemma \ref{lmA} as this is a small perturbation of a void braid. We are left with $\gamma_{1}|_{t\in [0,\frac{1}{2}]}$ and the inverse of $\gamma_{2}|_{t\in [0,\frac{1}{2}]}$ which is contractible by Lemma \ref{lmA} again.

\end{proof}

\begin{remark}
Note that in the above lemma, we deal with the space $C'_{n}(\R{}P^{k-1})$, not with ${\tilde C}'_{n}(\R{}P^{k-1})$. On the other hand, we may always choose $i_{1}\in \{k,k+1\}$; hence, the path in question can be chosen in ${\tilde C}'(\R{}P^{k-1})$.
\end{remark}

Now, for every subset $m\subset [n], Card(m)=k+1$ we can create a path $p_{m}$ starting from any of the base points listed above and ending at the corresponding basepoints.

Now, we construct our path step-by step by applying Lemma \ref{lm2} and returning to some of base points by using Lemma \ref{lemBB}.

From \cite{HigherGnk}, we can easily get the following
\begin{lemma}
Let $i_{1},\cdots, i_{k+1}$ be some permutation of $1,\cdots, k+1$. Then the concatenation of paths $p_{i_{1}i_{2}\cdots i_{k}}p_{i_{1}i_{3}i_{4}\cdots i_{k+1}}\cdots p_{i_{2}i_{3}\cdots i_{k}}$ \\ is homotopic to the concatenation of paths in the inverse order $$p_{i_{2}i_{3}\cdots i_{k}}\cdots p_{i_{1}i_{3}i_{4}\cdots i_{k+1}}p_{i_{1}i_{2}\cdots i_{k}}.$$ \label{quadrisec}
\end{lemma}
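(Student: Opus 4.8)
The plan is to reduce the assertion to a single local homotopy performed near the most degenerate configuration in sight, namely the one $z_{0}$ in which all $k+1$ points $x_{i_{1}},\dots,x_{i_{k+1}}$ lie on one common hyperplane of $\R{}P^{k-1}$. I would first fix such a $z_{0}$, chosen generically so that any $k-1$ of the points still span a hyperplane; by Theorem \ref{wulem} the particular choice is irrelevant. This $z_{0}$ is a point of the discriminant of codimension $2$: a hyperplane of $\R{}P^{k-1}$ carries $k-1$ free parameters, and forcing the two remaining (moving) points onto it costs two conditions. The next task is a local normal form for the singular set $D$ (configurations in which some $k$ points are coplanar) in a neighbourhood of $z_{0}$. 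For each of the $k+1$ subsets $m^{j}=U\setminus\{i_{j}\}$, $U=\{i_{1},\dots,i_{k+1}\}$, there is exactly one smooth local sheet of $D$; all $k+1$ of them contain $z_{0}$ and, for a generic $z_{0}$, meet transversally along the codimension-$2$ stratum $\Sigma$ of totally coplanar configurations, so that in a normal $2$-disk $\Pi$ to $\Sigma$ at $z_{0}$ the sheets cut out $k+1$ straight lines through the centre, at pairwise distinct directions.

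I would then realise the two concatenations of the Lemma as the two passages of a small circle in $\Pi$ around its centre. Fixing a basepoint $P\in\Pi$ off all the lines and its antipode $-P$, the arc of this circle on one side crosses the $k+1$ sheets once each in a definite angular order, and the arc on the other side crosses the same sheets in the reverse order. By the construction of the word map $f$ (Theorem \ref{thgn3} and \cite{HigherGnk}) the first arc carries the letters $a_{m^{j}}$ in exactly the order of the left-hand product of the Lemma, and the second arc carries them in the order of the right-hand product, so the two arcs are the two sides of the tetrahedron relation \eqref{eq:gnk_tetrahedron_relation}. Here I would use Lemma \ref{lm2} to identify each individual sheet-crossing with the standard elementary path $p_{m}$ up to homotopy, and Lemmas \ref{lemmaB}, \ref{lemBB} together with Lemma \ref{lmA} to splice the local picture to the global reference points $z_{s}$ by void paths without changing the homotopy class. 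Checking that the angular order of the sheets in $\Pi$ is precisely the order written in the statement is the book-keeping part of the argument and has to be read off from the explicit model.

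The heart of the proof is then one explicit homotopy: I sweep the first arc to the second through the family of arcs from $P$ to $-P$ of decreasing bulge, the middle member being the straight diameter of $\Pi$ through its centre. Every member of this family is a legitimate path in $C'_{k+1}(\R{}P^{k-1})$ — the decisive point being that the intermediate \emph{singular} configurations are still admissible, since the points stay pairwise distinct and, by continuity from $z_{0}$, any $k-1$ of them remain in general position on the whole small disk — and the entire sweep meets $\Sigma$ exactly once, at the instant the arc becomes the diameter and the configuration is totally coplanar. This is a homotopy rel endpoints from the left-hand concatenation to the right-hand one, which is the assertion. I expect the genuine obstacle to lie entirely in the second paragraph's normal form: verifying that near $z_{0}$ the set $D$ is indeed the transverse union of precisely $k+1$ smooth sheets with the claimed angular order, and that along the diameter the only degeneration encountered is the single passage through $\Sigma$ (no accidental extra coplanarity of a different $k$-subset, and no loss of general position among any $k-1$ of the points). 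For $k=3$ this local computation is exactly the one carried out in \cite{HigherGnk}, and it goes through verbatim for general $k$; once it is in hand the sweeping homotopy is immediate.
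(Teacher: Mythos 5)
Your construction is a genuinely different (and far more detailed) route than the paper's: the paper disposes of this lemma in two sentences by citing \cite{HigherGnk} for the homotopy realizing \emph{one} permutation and then transferring it to an arbitrary permutation via the homogeneity of the basepoints, whereas you re-derive the underlying geometry --- the codimension-$2$ stratum $\Sigma$ of totally coplanar configurations, the $k+1$ transverse sheets of the discriminant through $z_0$, and the sweep of arcs across the normal disk $\Pi$. That picture is correct and is essentially what \cite{HigherGnk} proves; as a self-contained argument it is more informative than the paper's proof.

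There is, however, a concrete gap precisely at the point the paper's proof is designed to handle, namely the quantifier ``for \emph{every} permutation $i_1,\dots,i_{k+1}$''. A single choice of $z_0$ fixes one cyclic order of the $k+1$ sheets in $\Pi$; varying the starting point $P$ on the circle and reversing orientation yields only the $2(k+1)$ rotations and reversals of that order, which for $k\ge 3$ is a proper subset of the $(k+1)!$ orderings, hence of the $(k+1)!/2$ relations the lemma asserts. You wave this away as ``book-keeping'' and justify the irrelevance of the choice of $z_0$ by Theorem \ref{wulem}, but that theorem concerns $k+1$ points in \emph{general position} and does not apply to $z_0$, whose $k+1$ points all lie on one hyperplane: $PGL(k,\R)$ is not transitive on such configurations (there are moduli), so distinct points of $\Sigma$ may well exhibit distinct angular orders, and you have not shown that every order is realized. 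The repair is exactly the paper's symmetry argument: the self-homeomorphism of $C'_{k+1}(\R{}P^{k-1})$ permuting the labels of the points, corrected by a projective transformation to return to the chosen basepoints, carries the local model at $z_0$ to one realizing any prescribed ordering of the sheets. With that step added your argument closes; without it, the lemma is only proved for the orderings accidentally realized at your chosen $z_0$.
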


\begin{proof}
Indeed, in \cite{HigherGnk}, some homotopy corresponding to the above mentioned relation corresponding to {\em some} permuation is discussed. However, since all basepoints are similar to each other as discussed above, we can transform the homotopy from \cite{HigherGnk} to the homotopy for any permutation.
\end{proof}

\begin{lemma}
For the path starting from the point $(y_{1},\cdots, y_{k},y_{s})$ constructed as in  Lemma \ref{lm2} for the set of indices $j$, the endpoints of this path $(y_{1},\cdots, y_{k},y_{s'})$ are such that:
\begin{enumerate}
\item if $j=1,\cdots, k$, then $s'$ differs from $s$ only in coordinate $a_{j}$;
\item if $j=k+1$, all coordinates of $s'$ differ from those coordinates of $s$ by sign.
\end{enumerate}
\label{governed}
\end{lemma}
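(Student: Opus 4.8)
The plan is to prove the statement by a direct case analysis on the excluded index $j$, following the moving point as it crosses the singular hyperplane and recording what happens to the sign string. Throughout I work with the configuration $(e_1,\dots,e_{k-1},x_k,x_{k+1})$, where $e_1,\dots,e_{k-1}$ are the fixed coordinate points $y_1,\dots,y_{k-1}$ and, at a base point, $x_k=y_k=(0:\cdots:0:1)$ and $x_{k+1}=y_s=(s_1:\cdots:s_{k-1}:1)$. The first step is to identify, for the generator indexed by $j$, which point moves and which hyperplane it must meet. The singular $k$-tuple is $\{1,\dots,k+1\}\setminus\{j\}$, and by the Remark after Lemma \ref{lm2} the moving point may be taken to be $x_{k+1}$ when $j\le k$ and $x_k$ when $j=k+1$. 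In each case the remaining $k-1$ points of the tuple are coordinate points $e_i$, and they span the hyperplane $\{a_j=0\}$ when $j\le k$, respectively $\{a_k=0\}$ when $j=k+1$; the singular instant is exactly the moment at which the moving point lies on that hyperplane.

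Granting this, the cases $j\le k-1$ are immediate. The point $x_{k+1}$ crosses $\{a_j=0\}$, so the sign of $a_j(x_{k+1})$ is reversed while every other coordinate is unchanged; in particular $a_k(x_{k+1})$ stays positive, so the return to a base point through Lemma \ref{lemBB} (a geodesic, as in Lemma \ref{lemmaB}) introduces no further sign change, and the net effect is $s'_j=-s_j$. For $j=k$ the same point $x_{k+1}$ crosses $\{a_k=0\}$ instead, so $a_k(x_{k+1})$ becomes negative; renormalising the last coordinate back to $+1$ multiplies the representative vector by $-1$ and hence flips every $s_i$. This is the meaning of ``$s'$ differs from $s$ only in the coordinate $a_k$'' once one passes to the normalised representative, and it coincides with the total sign reversal asserted for $j=k+1$.

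The case $j=k+1$ is where I expect the real difficulty, because the point that moves is $x_k$ while the sign string to be controlled belongs to the \emph{stationary} point $x_{k+1}$. The mechanism I would make precise is the following. Here $x_k$ leaves $e_k$, crosses $\{a_k=0\}$ once, and returns to $e_k$ from the side $a_k<0$, so a continuous choice of representative vector for $x_k$ ends at $-e_k$ rather than at $+e_k$. The void return of Lemma \ref{lemBB} restores $x_k$ to the standard representative $+e_k$ while fixing $e_1,\dots,e_{k-1}$, and the net transformation that accomplishes this (keeping the representatives of $e_1,\dots,e_{k-1}$ positive) is $\operatorname{diag}(1,\dots,1,-1)$ up to positive scalars. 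Applying it to $x_{k+1}=(s_1:\cdots:s_{k-1}:1)$ and renormalising the last coordinate to $+1$ reverses all of $s_1,\dots,s_{k-1}$, which is precisely the assertion.

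The genuinely delicate part is thus the bookkeeping in the case $j=k+1$: one has to tie the sign convention on $x_{k+1}$ to a choice of lift of the frame $(e_1,\dots,e_{k-1},x_k)$ — the same $2^{k-1}$-fold ambiguity that is responsible for the index $2^{k-1}$ subgroup in Theorem \ref{th0} — and verify that a single crossing of $\{a_k=0\}$ by $x_k$ toggles exactly this lift. Once that identification is in place the remaining cases fit the same template, with $x_{k+1}$ in the role of the moving point and a single coordinate $a_j$ in the role of $a_k$, so the computation reduces to the elementary sign bookkeeping carried out above.
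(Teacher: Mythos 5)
First, note that the paper states Lemma \ref{governed} with no proof at all, so there is nothing on the paper's side to compare your sketch against; on the merits, your treatment of the cases $j=1,\dots,k$ is correct. There the moving point is $x_{k+1}$, the requirement that the path have a single singular moment forces $a_i(x_{k+1})\neq 0$ for $i\neq j$ throughout, so along a continuous lift only the sign of $a_j(x_{k+1})$ changes; renormalising $a_k=+1$ then yields $s'_j=-s_j$ for $j\le k-1$ and a global flip of $s$ for $j=k$, and the geodesic of Lemma \ref{lemmaB} stays inside one orthant, as you say.

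The case $j=k+1$ contains a genuine gap. You assert that $x_k$ returns to the projective point $e_k$ (with lift $-e_k$) while $x_{k+1}$ stays put, and then apply a transformation $\mathrm{diag}(1,\dots,1,-1)$ to $x_{k+1}$. But the base points $z_s$ are defined by the actual positions of the points, not by choices of lifts: if $x_k$ really came back to $e_k$ with $x_{k+1}$ fixed at $y_s$, the path would already terminate at $z_s$ and your conclusion $s'=-s$ would not follow --- there is no stage at which the diagonal map could act on $x_{k+1}$. Moreover, such a path does not exist: projecting $x_k(t)$ to $(a_i(x_k):a_k(x_k))\in\R P^{1}$, the condition that the tuple omitting $i$ stay nonsingular means this loop, based at $(0:1)$, avoids $(s_i:1)$, hence lies in an arc and meets $(1:0)$ an even number of times --- it cannot cross $\{a_k=0\}$ exactly once and return to $e_k$. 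What really happens is that the Lemma \ref{lm2} path leaves $x_k$ at some $P\neq e_k$, and the void return of Lemma \ref{lemBB} moves \emph{both} $x_k$ and $x_{k+1}$; it is this return that carries $x_{k+1}$ into the opposite orthant. One way to close the gap is to track the lift-independent ratios $\bigl(a_j(x_k)a_k(x_{k+1})-a_k(x_k)a_j(x_{k+1})\bigr)/\bigl(a_k(x_k)\,a_k(x_{k+1})\bigr)$, which equal $-s_j$ at every base point and whose three factors are precisely the singularity loci: each generator crosses exactly one factor's zero set, exactly once, and the generator omitting $k+1$ crosses $a_k(x_k)=0$, flipping every $s_j$ simultaneously. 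This formalises your ``lift of the frame'' remark, which as written identifies the right mechanism and the right answer but does not constitute an argument.
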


Denote the map from words in $G_{k+1}^{k}$ to paths between basepoints by $g$.

By construction, we see that for every word $w$ we have $f(g(w))=w\in G_{k+1}^{k}$.

Now, we define the group ${\tilde G}_{k+1}^{k}$ as the subgroup of $G_{k+1}$ which is taken by $g$ to braids. From Lemma \ref{governed}, we see that this is a subgroup of index $(k-1)$: there are exactly $(k-1)$ coordinates. \\

Let us pass to the proof of Theorem \ref{th1}. Our next goal is to see that equal words can originate only from homotopic paths.

To this end, we shall first show that the map $f$ from Theorem \ref{thgn3} is an isomorphism for $n=k+1$. To perform this goal, we should construct the inverse map $g:{\tilde G}_{k+1}^{k}\to \pi_{1}({\tilde C'}_{k+1}(\R{}P(k-1)))$.

Note that for $k=3$ we deal with the pure braids $PB_{4}(\R{}P^{2}).$

Let us fix a point $x\in C'_{4}(\R{}P^{2})$. With each generator $a_{m},m\subset [n],Card(m)=k$ we associate a path  $g(m)=y_{m}(t)$, described in Lemma \ref{lm2}. This path is not a braid: we can return to any of the $2^{k-1}$ base points. However, once we take the concatenation of paths correspoding to ${\tilde G}_{k+1}^{k}$, we get a braid.

By definition of the map $f$, we have $f(g(a_{m}))=a_{m}$. Thus, we have chosen that the map $f$ is a surjection.

Now, let us prove that the kernel of the map $f$ is trivial. Indeed, assume there is a pure braid $\gamma$ such that $f(\gamma)=1\in G_{k+1}^{k}$. We assume that $\gamma$ is good and stable. If this path has $l$ critical points, then we have the word corresponding to it  $a_{m_1}\cdots a_{m_l}\in G_{k+1}^{k}$.

Let us perform the transformation $f(\gamma)\to 1$ by applying the relations of $G_{k+1}^{k}$ to it and transforming the path $\gamma$ respectively. For each relation of the sort $a_{m}a_{m}=1$ for a generator $a_{m}$ of the group $G_{k+1}^{k}$, we see that the path $\gamma$ contains two segments whose concatenation is homotopic to the trivial loop (as follows from the second statement of Lemma \ref{lm2}).

Whenever we have a relation of length $2k+2$ in the group $G_{k+1}$, we use the Lemma \ref{quadrisec} to perform the homotopy of the loops.

 Thus, we have proved that if the word $f(\gamma)$ corresponding to a braid $\gamma\in G_{k+1}^{k}$ is equal to $1$ in $G_{k+1}^{k}$ then the braid $\gamma$ is isotopic to a braid $\gamma'$ such that the word corresponding to it is empty. Now, by Lemma \ref{lmA}, this braid is homotopic to the trivial braid.

\subsection{The group $H_{k}$ and the algebraic lemma}
\label{sec:h_k_and_geomtery}

The aim of the present section is to reduce the word problem in $G_{k+1}^{k}$ to the word problem in a certain subgroup of it, denoted by $H_{k}$.

Let us rename all generators of $G_{k+1}^{k}$ lexicographically:
$$b_{1}=a_{1,2,\cdots,k},\cdots, b_{k+1}=a_{2,3,\cdots, k+1}.$$

Let $H_{k}$ be the subgroup of $G_{k+1}^{k}$ consisting of all elements $x\in G_{k+1}^{k}$ that can be represented by words with no occurencies of the last letter $b_{k+1}$.

Our task is to understand whether a word in $G_{k+1}^{k}$ represents an element in $H_{k}$. That question shall be called the {\em generalised word problem for $H_k$ in $G_{k+1}^k$}. In general, let us introduce the following definition.

\begin{definition}
	Let $G$ be a group and $H\subset G$ be its subgroup. The problem of algorithmic determination whether a given word $w\in G$ belongs to the subgroup $H$ is called the {\em generalised word problem for $H$ in $G$}.
\end{definition}

To solve this problem, we recall the map defined in \cite{MN}. Consider the group $F_{k-1}=\Z_{2}^{*2^{k-1}}=\langle c_{m}|c_{m}^{2}=1\rangle$, where all generators $c_{m}$ are indexed by $(k-1)$-element strings of $0$ and $1$ with only relations being that the square of each generator is equal to $1$. We shall construct a map\footnote{This map becomes a homomorphism when restricted to a finite index subgroup.} from $G_{k+1}^{k}$ to $F_{k-1}$ as follows.

Take a word $w$ in generators of $G_{k+1}^{k}$ and list all occurencies of the last letter $b_{k+1}=a_{2,\cdots, k+1}$ in this word. With any such occurence we first associate the string of indices $0,1$ of length $k$. The $j$-th index is the number of letters $b_{j}$ preceding this occurence of $b_{k+1}$ modulo $2$. Thus, we get a string of length $k$ for each occurence.

Let us consider ``opposite pairs'' of strings $(x_{1},\cdots, x_{k})\sim(x_{1}+1,\cdots, x_{k}+1)$ as equal. Now, we may think that the last ($k$-th) element of our string is always $0$, so, we can restrict ourselves with $(x_{1},\cdots, x_{k-1},0)$. Such a string of length $k-1$ is called the {\em index} of the occurence of $b_{k+1}$.

Having this done, we associate with each occurence of $b_{k+1}$ having index $m$ the generator $c_{m}$ of $F_{k-1}$. With the word $w$, we associate the word $f(w)$ equal to the product of all generators $c_{m}$ in order.

In \cite{MN}, the following Lemma is proved:
\begin{lemma}
The map $f:G_{k+1}^{k}\to F_{k-1}$ is well defined.
\end{lemma}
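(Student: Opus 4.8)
The plan is to show that the map $f\colon G_{k+1}^{k}\to F_{k-1}$ does not depend on the choice of word representing an element of $G_{k+1}^{k}$; that is, the value of $f$ is unchanged under each of the three families of defining relations of $G_{k+1}^{k}$. Since any two words representing the same element differ by a finite sequence of such relation moves, invariance under each move gives well-definedness. I would therefore fix a word $w$, apply a single relation to obtain $w'$, and check in each case that the products of $c_m$'s assigned to $w$ and $w'$ coincide in $F_{k-1}$.

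First I would treat the involution relations $b_i^2=1$ (equivalently insertion/deletion of $b_ib_i$). If $i\neq k+1$, inserting a cancelling pair $b_ib_i$ at some position changes, for every later occurrence of $b_{k+1}$, the $i$-th coordinate of its index string by $+2\equiv 0\pmod 2$; hence all indices are unchanged and $f(w)=f(w')$. If $i=k+1$, inserting $b_{k+1}b_{k+1}$ creates two consecutive occurrences of $b_{k+1}$ with \emph{identical} index $m$ (no letter lies between them, so the counts agree), contributing $c_mc_m$ to the product; since $c_m^2=1$ in $F_{k-1}$, the value of $f$ is again unchanged. This is the step where the specific definition of $F_{k-1}$ as a product of copies of $\Z_2$, and the passage to indices modulo $2$, is used essentially.

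Next I would handle the tetrahedron relation, which for $G_{k+1}^{k}$ reads $b_1b_2\cdots b_{k+1}=b_{k+1}\cdots b_2 b_1$ (and its analogues for other orderings of the underlying $(k+1)$-tuple $U$). Here I would compute the index strings of the occurrences of $b_{k+1}$ on the two sides. On the left side, $b_{k+1}$ appears once, at the end, preceded by one copy of each $b_1,\dots,b_k$, giving the all-ones string of length $k$, whose index (after identifying opposite pairs and normalising the last coordinate to $0$) is some fixed $m_0$. On the right side $b_{k+1}$ appears once at the beginning, preceded by no letters, giving the all-zeros string, whose index is also $m_0$ after the opposite-pair identification. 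Thus both sides contribute the single generator $c_{m_0}$, and the relation preserves $f$. For a tetrahedron relation applied in the middle of a longer word one must also check the indices of \emph{subsequent} occurrences of $b_{k+1}$: rearranging $b_1,\dots,b_k,b_{k+1}$ changes the prefix counts of those letters, but since on both sides each of $b_1,\dots,b_k$ occurs exactly once within the relation block, the parity contributed to any later index is the same on both sides, so later indices are unmodified.

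Finally I would dispose of far commutativity. For $n=k+1$ there are no far commutativity relations at all (as the paper notes, these require $n>k+1$), so in the case at hand this family is vacuous and nothing need be checked; I would remark this explicitly to confirm completeness of the case analysis. The main obstacle I anticipate is the bookkeeping in the tetrahedron case: one must verify carefully that under the opposite-pair identification $(x_1,\dots,x_k)\sim(x_1+1,\dots,x_k+1)$ the all-ones and all-zeros strings really yield the \emph{same} index, and that the parities transmitted to all later occurrences of $b_{k+1}$ genuinely match on the two sides of the relation. Once this parity comparison is organised—ideally by noting that the relation block contains each of $b_1,\dots,b_k$ exactly once on both sides—the remaining verifications are routine, and well-definedness of $f$ follows.
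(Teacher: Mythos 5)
The paper does not actually prove this lemma; it is quoted from \cite{MN}, so there is no internal argument to compare against. Your direct verification---checking that $f$ is unchanged under each family of defining relations---is the natural proof and is essentially correct. The involution cases are handled properly (for $i\le k$ the index strings of later occurrences shift by $2\equiv 0$ in one coordinate; for $i=k+1$ the two new occurrences carry the same index $m$ and contribute $c_m^2=1$, and later indices are untouched since they only count $b_1,\dots,b_k$), and far commutativity is indeed vacuous for $n=k+1$ because any two distinct $k$-subsets of $\{1,\dots,k+1\}$ meet in exactly $k-1$ elements. The one place to tighten is the tetrahedron relation for a general ordering: in the letters $b_i$ each such relation reads $b_{\sigma(1)}\cdots b_{\sigma(k+1)}=b_{\sigma(k+1)}\cdots b_{\sigma(1)}$ for an arbitrary permutation $\sigma$ of $\{1,\dots,k+1\}$, so the single $b_{k+1}$ in the block need not sit at an end. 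If it sits at position $p$, its within-block prefix on the left side is $S=\{\sigma(1),\dots,\sigma(p-1)\}$ and on the right side it is the complement $S^{c}$ of $S$ in $\{1,\dots,k\}$; since the indicator vectors of $S$ and $S^{c}$ sum to $(1,\dots,1)$, the two length-$k$ strings differ by the all-ones vector and are therefore identified under the opposite-pair equivalence, yielding the same index $m$ and hence the same generator $c_m$. This is exactly the computation you carried out in the special case $S=\{1,\dots,k\}$, $S^{c}=\emptyset$, and your parity argument for the subsequent occurrences of $b_{k+1}$ (each $b_j$ with $j\le k$ occurs exactly once in the block on either side) goes through unchanged. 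With that one observation made explicit, the proof is complete.
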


Now, let us prove the following crucial
\begin{lemma}
If $f(w)=1$ then $w\in H_{k}$.
\end{lemma}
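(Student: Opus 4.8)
The plan is to induct on the number $r$ of occurrences of the letter $b_{k+1}$ in the word $w$, the base case $r=0$ being immediate since then $w$ is already written without $b_{k+1}$ and so lies in $H_k$. For the inductive step I exploit the structure of $F_{k-1}$ as a free product of copies of $\Z_2$: the element $f(w)=c_{m_1}c_{m_2}\cdots c_{m_r}$ can be trivial only if the word $c_{m_1}\cdots c_{m_r}$ is non-reduced, i.e.\ $m_i=m_{i+1}$ for some $i$. By construction $m_i$ is the running parity string recorded at the $i$-th occurrence of $b_{k+1}$, so two occurrences that are consecutive among the $b_{k+1}$'s carry equal index precisely when the subword $u$ lying strictly between them — a word in $b_1,\dots,b_k$ only, hence an element of $H_k$ — has parity vector $(0,\dots,0)$ or $(1,\dots,1)$. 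Thus I may write $w=A\,b_{k+1}\,u\,b_{k+1}\,B$ with $u\in H_k$ of such \emph{constant parity}.

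The heart of the argument is the following \emph{Key Lemma}: if $u\in H_k$ has parity vector $(0,\dots,0)$ or $(1,\dots,1)$, then $b_{k+1}\,u\,b_{k+1}\in H_k$. Granting this, I replace the block $b_{k+1}\,u\,b_{k+1}$ by its value $v\in H_k$; this is a genuine equality in $G_{k+1}^k$, so $w'=A\,v\,B$ represents the same element and, by well-definedness of $f$ (the preceding lemma), still satisfies $f(w')=f(w)=1$. Since $w'$ contains $r-2$ occurrences of $b_{k+1}$, the induction hypothesis yields $w'\in H_k$, whence $w=w'\in H_k$, completing the reduction.

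To set up the Key Lemma I first note that $b_i\mapsto e_i$ extends to a homomorphism $\mathrm{par}\colon G_{k+1}^k\to\Z_2^{k+1}$ (both sides of the tetrahedron relation and of far commutativity carry the same letters, and $2e_i=0$), so ``constant parity'' is well defined and $K:=\{u\in H_k:\mathrm{par}(u)\in\{0,\mathbf 1\}\}$ is a subgroup of $H_k$ of index $2^{k-1}$ — reassuringly the same index occurring in Theorem~\ref{th0}. Conjugation $u\mapsto b_{k+1}\,u\,b_{k+1}$ is a multiplicative automorphism, so to prove $b_{k+1}\,K\,b_{k+1}\subseteq H_k$ it suffices to verify this on a generating set of $K$. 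Rewriting the tetrahedron relation \eqref{eq:gnk_tetrahedron_relation} in the $b$-generators as $b_1b_2\cdots b_{k+1}=b_{k+1}b_k\cdots b_1$ and multiplying on the right by $b_{k+1}$ gives the base identity
\[
b_{k+1}\,(b_k b_{k-1}\cdots b_1)\,b_{k+1}=b_1b_2\cdots b_k,
\]
that is $b_{k+1}\,w_0^{-1}\,b_{k+1}=w_0$ for $w_0=b_1\cdots b_k$; this disposes of the all-ones class, since $K=\langle K_0,w_0\rangle$ where $K_0=\ker(\mathrm{par}|_{H_k})$.

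The main obstacle is therefore the even-parity class: showing $b_{k+1}\,K_0\,b_{k+1}\subseteq H_k$. Here naive ``sliding'' of $b_{k+1}$ through $u$ fails, because for $n=k+1$ there is no far commutativity and the only relation mixing $b_{k+1}$ with the other generators is the single tetrahedron relation, so the letters of $u$ cannot be reordered to expose a block $b_1\cdots b_k$ adjacent to $b_{k+1}$. I would resolve this by recognizing $G_{k+1}^k=\langle H_k,b_{k+1}\rangle$ as an amalgamated free product $H_k \ast_{K} \bigl(K\rtimes_{\varphi}\langle b_{k+1}\rangle\bigr)$, where $\varphi$ is the involution of $K$ induced by conjugation by $b_{k+1}$; the Key Lemma is then exactly the amalgamation relation $b_{k+1}\,u\,b_{k+1}=\varphi(u)\in K\subseteq H_k$, and the normal form theorem for amalgams, read back through $f$, yields the statement at once. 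The one computation that must be carried out in full is that $\varphi$ maps all of $K$ into $K$ — equivalently, that conjugation by $b_{k+1}$ preserves the constant-parity subgroup — which I would establish by expressing a generating set of $K_0$ through the reversible block $w_0$ and its translates and applying the base identity repeatedly.
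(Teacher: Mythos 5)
Your reduction coincides with the paper's: since $F_{k-1}$ is a free product of copies of $\Z_2$, triviality of $f(w)$ forces two consecutive occurrences of $b_{k+1}$ whose intermediate word $u$ (a word in $b_1,\dots,b_k$ only) has constant parity vector, and everything comes down to showing $b_{k+1}ub_{k+1}\in H_k$ for such $u$, after which one inducts on the number of occurrences of $b_{k+1}$. Your treatment of the all-ones parity class via the reversal identity $b_{k+1}(b_k\cdots b_1)b_{k+1}=b_1\cdots b_k$ is also correct.

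The gap is that you never actually prove the Key Lemma in the case that carries all the content, namely $u\in K_0$ (every $b_i$ occurring an even number of times). The amalgamated-product argument is circular: to assert $G_{k+1}^{k}=H_k\freeprod_{K}(K\rtimes\langle b_{k+1}\rangle)$ you must already know that conjugation by $b_{k+1}$ maps $K$ into $H_k$ (indeed into $K$) and that the tetrahedron relations involving $b_{k+1}$ reduce to relations of the form $b_{k+1}ub_{k+1}=\varphi(u)$ with $u\in K$ --- which is the Key Lemma and more. You flag this yourself as ``the one computation that must be carried out in full,'' but the plan offered --- expressing a generating set of $K_0$ ``through the reversible block $w_0$ and its translates'' --- is not an argument: $K_0$ is the kernel of $H_k\to\Z_2^{k}$, its Schreier generators are conjugates of single letters by coset representatives, and it is not explained how these are built from $w_0$ or why ``applying the base identity repeatedly'' keeps the conjugate inside $H_k$. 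The paper proves exactly this missing step head-on, by induction on the length of $u$: it finds the first repeated letter in $u=b_{i_1}\cdots b_{i_{p-1}}b_{i_1}B'$, inserts $PP^{-1}$ and $QQ^{-1}$ for suitable complementary words, and uses the tetrahedron relation (a product of all $k+1$ generators, each occurring once, equals its reversal) to push $b_{k+1}$ rightward while strictly shortening the word separating the two occurrences of $b_{k+1}$. So the ``naive sliding'' you dismiss is, in refined form, precisely what works; without that inductive shortening argument your proof is incomplete.
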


In other words, the free group $F_{k-1}$ yields the only obstruction for an element from $G_{k+1}^{k}$ to have a presentation with no occurence of the last letter.

\begin{proof}
Let $w$ be a word such that $f(w)=1$. If $f(w)$ is empty, then there is nothing to prove. Otherwise $w$ contains two ``adjacent'' occurencies of the same index. This means that $w=A b_{k+1} B b_{k+1} C$, where $A$ and $C$ are some words, and $B$ contains no occurencies of $b_{k+1}$ and the number of occurencies of $b_{1},b_{2},\cdots, b_{k}$ in $B$ are of the same parity.

Our aim is to show that $w$ is equal to a word with smaller number of $b_{k+1}$ in $G_{k+1}^{k}$. Then we will be able to induct on the number of $b_{k+1}$ until we get a word without $b_{k+1}$.

Thus, it suffices for us to show that $b_{k+1}Bb_{k+1}$ is equal to a word from $H_{k}$. We shall induct on the length of $B$. Without loss of generality, we may assume that $B$ is reduced, i.e., it does not contain adjacent $b_{j}b_{j}$.

Let us read the word $B$ from the very beginning $B=b_{i_1}b_{i_2}\cdots$ If all elements $i_{1},i_{2},\cdots$ are distinct, then, having in mind that the number of occurencies of all generators in $B$ should be of the same parity, we conclude that $b_{k+1}B= B^{-1} b_{k+1}$, hence $b_{k+1}Bb_{k+1}=B^{-1}b_{k+1}b_{k+1}=B^{-1}$ is a word without occurencies of $b_{k+1}$.

Now assume ${i_1}={i_p}$ (the situation when the first repetition is for ${i_j}={i_p},1<j<p$ is handled in the same way). Then we have $b_{k+1}B=b_{k+1}b_{i_1}\cdots b_{i_{p-1}} b_{i_1} B'$. Now we collect all indices distinct from $i_{1},\cdots, i_{p-1},{k+1}$ and write the word $P$ containing exactly one generator for each of these indices (the order does not matter). Then the word $W = P b_{k+1} b_{i_{1}}\cdots b_{i_{p-1}}$ contains any letter exactly once and we can invert the word $W$ as follows: $W^{-1}=b_{i_{p-1}}\cdots b_{i_{1}}b_{k+1}P^{-1}$. Thus, $b_{k+1}B=P^{-1}(Pb_{k+1}b_{i_1}\cdots b_{i_{p-1}})b_{i_1}B'=P^{-1}b_{i_{p-1}}\cdots b_{i_1}b_{k+1}P^{-1}b_{i_1}B'$.

We know that the letters in $P$ (hence, those in $P^{-1}$) do not contain $b_{i_1}$. Thus, the word $P^{-1} b_{i_1}$ consists of distinct letters. Now we perform the same trick: create the word $Q=b_{i_2}b_{i_3}\cdots b_{i_{p-1}}$ consisting of remaining letters from $\{1,\cdots, k\}$, we get:

$$ P^{-1}b_{i_{p-1}}\cdots b_{i_1}b_{k+1}P^{-1}b_{i_1}B'$$
$$=P^{-1}b_{i_{p-1}}\cdots b_{i_1}QQ^{-1}b_{k+1}P^{-1}b_{i_1}B'$$
$$=P^{-1}b_{i_{p-1}}\cdots b_{i_1}Qb_{i_1}Pb_{k+1}QB'.$$

Thus, we have moved $b_{k+1}$ to the right and between the two occurencies of the letter $b_{k+1}$, we replaced $b_{i_1}\cdots, b_{i_{p-1}}b_{i_1}$ with just $b_{i_2}\cdots b_{i_{p-1}}$, thus, we have shortened the distance between the two adjacent occurencies of $b_{k+1}$.

Arguing as above, we will finally cancel these two letters $b_{k+1}$ and perform the induction step.
\end{proof}

\begin{theorem}
Generalised word problem for $H_k$ in $G_{k+1}^k$ is solvable.\label{kl}
\end{theorem}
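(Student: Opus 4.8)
The plan is to reduce the generalised word problem for $H_k$ in $G_{k+1}^k$ to the word problem in the free product of cyclic groups $F_{k-1}=\Z_2^{*2^{k-1}}$ by using the map $f:G_{k+1}^k\to F_{k-1}$ and the two preceding Lemmas. First I would observe that the two Lemmas together establish the following equivalence: a word $w$ in the generators of $G_{k+1}^k$ represents an element of $H_k$ \emph{if and only if} $f(w)=1$ in $F_{k-1}$. The ``if'' direction is exactly the crucial Lemma just proved ($f(w)=1\implies w\in H_k$). For the ``only if'' direction, I would note that $f$ is constructed so as to count, up to the appropriate identification of opposite index strings, the obstruction coming from the occurrences of the last letter $b_{k+1}$; if $w$ can be rewritten with no occurrence of $b_{k+1}$ at all, then the associated product of generators $c_m$ is empty, so $f(w)=1$. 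Since $f$ is well defined (the first Lemma of the section), its value does not depend on the chosen representative word, and hence $w\in H_k$ forces $f(w)=1$.

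Given this equivalence, the algorithm is immediate: to decide whether an input word $w$ belongs to $H_k$, compute $f(w)$ by the explicit recipe in the text (list the occurrences of $b_{k+1}$, form each index string of length $k-1$ from the parities of the preceding $b_j$, and take the ordered product of the corresponding generators $c_m$), and then test whether the resulting word equals $1$ in $F_{k-1}$. The point is that $F_{k-1}$ is a free product of finitely many copies of $\Z_2$, and such free products have an easily solvable word problem: a word in the involutive generators $c_m$ is trivial precisely when it freely reduces to the empty word after cancelling adjacent equal generators (using $c_m^2=1$). This reduction is a terminating, purely combinatorial procedure, so the whole decision procedure halts and is effective.

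The main thing I would want to be careful about is the ``only if'' direction, since the crucial Lemma only gives one implication; I must make sure the converse genuinely holds and is not merely assumed. The potential subtlety is that $f$ is only a homomorphism after restriction to a finite index subgroup (as the footnote warns), so I would phrase the argument purely in terms of the well-definedness of $f$ as a map on $G_{k+1}^k$ rather than invoking homomorphism properties: well-definedness already guarantees that if $w$ and $w'$ are equal in $G_{k+1}^k$ then $f(w)=f(w')$, and an element of $H_k$ by definition admits a representative word $w'$ with no $b_{k+1}$, for which $f(w')$ is the empty product. Thus $f(w)=f(w')=1$, closing the equivalence. I would emphasise that this is where the two Lemmas combine to give exactly an ``iff'' characterisation, which is the whole content of the theorem.

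In summary, the proof assembles as follows: (i) invoke the well-definedness Lemma to get $w=w'$ in $G_{k+1}^k$ implies $f(w)=f(w')$; (ii) deduce $w\in H_k\implies f(w)=1$ from the definition of $H_k$ together with (i); (iii) invoke the crucial Lemma for the reverse implication $f(w)=1\implies w\in H_k$; (iv) conclude $w\in H_k\iff f(w)=1$; and finally (v) note that equality to $1$ in $F_{k-1}=\Z_2^{*2^{k-1}}$ is algorithmically decidable by free reduction, yielding a decision procedure for membership in $H_k$. I expect step (ii)/(iv) — pinning down the converse cleanly without over-claiming that $f$ is a homomorphism on all of $G_{k+1}^k$ — to be the only genuinely delicate point; everything else is a direct appeal to the established Lemmas and the elementary word problem in a free product of involutions.
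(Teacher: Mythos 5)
Your proposal is correct and follows essentially the same route as the paper: the paper's own (very terse) proof likewise computes $f(w)$, uses the well-definedness of $f$ to conclude $w\notin H_k$ when $f(w)\neq 1$, and invokes the preceding lemma to produce a $b_{k+1}$-free representative when $f(w)=1$. Your explicit justification of the ``only if'' direction via well-definedness of $f$ (rather than any homomorphism property) and your remark on deciding triviality in $\Z_2^{*2^{k-1}}$ by free reduction simply spell out details the paper leaves implicit.
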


\begin{proof}
Indeed, having a word $w$ in generators of $G_{k+1}^{k}$, we can look at the image of this word by the map $f$. If it is not equal to $1$, then, from \cite{MN}, it follows that $w$ is non-trivial, otherwise we can construct a word ${\tilde w}$ in $H_{k}$ equal to $w$ in $G_{k+1}^{k}$.
\end{proof}

\begin{remark}
A shorter proof of Theorem \ref{kl} based on the same ideas was communicated to the authors by A.A.~Klyachko. We take the subgroup $K_{k}$ of $G_{k+1}^{k}$ generated by products $B_{\sigma}=b_{\sigma_{1}}\cdots b_{\sigma_{k}}$ for all permutations $\sigma$ of $k$ indices. This group $K_{k}$ contains the commutator of $H_{k}$ and is a normal subgroup in $G_{k+1}^{k}$. 

Moreover, the quotient group $G_{k+1}^{k}/ K_{k}$ is naturally isomorphic to the free product $(H_{k}/K_{k})*\langle b_{k+1}\rangle$. Hence, the problem whether an element of $G_{k+1}^{k}/ K_{k}$ belongs to $H_{k}/ K_{k}$ is trivially solved, which solves the initial problem because of the normality of $K_{k}$ in $G_{k+1}^{k}$.
\end{remark}

Certainly, to be able to solve the word problem in $H_{k}$, one needs to know a presentation for $H_{k}$. It is natural to take $b_{1},\cdots, b_{k}$ for generators of $H_{k}$. Obviously, they satisfy the relations $b_{j}^{2}=1$ for every $j$.

To understand the remaining relations for different $k$, we shall need geometrical arguments. \\

We have completely constructed the isomorphism between the (finite index subgroup) of the group $G_{k+1}^{k}$ and a fundamental group of some configuration space.

This completely solves the word problem in $G_{4}^{3}$ for braid groups in projective spaces are very well studied. The same can be said about the conjugacy problem in ${\tilde G}_{4}^{3}$.

Besides, we have seen that the word problem for the case of general $G_{k+1}^{k}$
can be reduced to the case of $H_{k}$.

It would be very interesting to compare the approach of $G_{n}^{k}$ with various generalizations of braid groups, e.g., Manin-Schechtmann groups \cite{ManinSchechtmann,KapranovVoevodsky}.

\section{Conjugacy problem in $G_4^3$}

We begin with the conjugacy problem for the group $G_4^3$.

\subsection[Existence of the algorithmic solution]{The existence of the algorithmic solution of the conjugacy problem in the group $G_4^3$}

Let us denote the free groups on the generators $a_1, \dots, a_n$ by $F(a_1,\dots, a_n)$.
					
\begin{theorem}
Group $G_4^3$ with presentation 
$$\langle a,b,c,d \,|\, a^2=b^2=c^2=d^2 = (abcd)^2=(bcad)^2=(cabd)^2=1 \rangle,$$
has another presentation
$$\langle a,b,c \,|\, a^2=b^2=c^2=1 \rangle \freeprod_C (F(x,y,z) \leftthreetimes \langle d \,|\, d^2=1 \rangle)$$
where
$$C = \langle abc, bca, cab\rangle = F(x,y,z), \; x=abc, \; y = bca, \; z = cab,$$ and $$dxd=x^{-1}, dyd=y^{-1}, dzd=z^{-1}.$$
\end{theorem}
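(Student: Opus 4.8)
The plan is to reduce the theorem to a single freeness lemma via a presentation rewriting. First I would record the elementary fact that, since $d^2=1$, for any word $w$ one has $(wd)^2=1 \iff wd=(wd)^{-1}=dw^{-1} \iff dwd=w^{-1}$. Applying this with $w=abc,\,bca,\,cab$ converts the three tetrahedron relations $(abcd)^2=(bcad)^2=(cabd)^2=1$ into exactly $d(abc)d=(abc)^{-1}$, $d(bca)d=(bca)^{-1}$, $d(cab)d=(cab)^{-1}$. Hence $G_4^3$ has the presentation $\langle a,b,c,d\mid a^2=b^2=c^2=d^2=1,\ d(abc)d=(abc)^{-1},\ d(bca)d=(bca)^{-1},\ d(cab)d=(cab)^{-1}\rangle$, which singles out the three elements $x=abc$, $y=bca$, $z=cab$.

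Next I would compare this with the standard presentation of the right-hand side. The amalgamated product $\langle a,b,c\mid a^2=b^2=c^2=1\rangle \freeprod_C (F(x,y,z)\leftthreetimes\langle d\mid d^2=1\rangle)$ is presented by generators $a,b,c,x,y,z,d$, subject to $a^2=b^2=c^2=1$ from the first factor, to $d^2=1$ and $dxd=x^{-1},\,dyd=y^{-1},\,dzd=z^{-1}$ from the second factor, and to the amalgamation relations $x=abc,\,y=bca,\,z=cab$ identifying $C\subset\langle a,b,c\rangle$ with the free factor $F(x,y,z)$. A Tietze transformation eliminating the redundant generators $x,y,z$ through the amalgamation relations returns precisely the presentation of $G_4^3$ found above. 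Equivalently, I would exhibit mutually inverse homomorphisms: $a,b,c,d\mapsto a,b,c,d$ maps $G_4^3$ into the right-hand side (the tetrahedron relations hold there because $dxd=x^{-1}$, etc., do), while the universal property of the amalgamated product builds the inverse from the inclusion $a,b,c\mapsto a,b,c$ and the assignment $x,y,z,d\mapsto abc,bca,cab,d$, the latter well defined in $G_4^3$ exactly because the tetrahedron relations are its defining relations; the two maps agree on $C$ by construction.

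The point that must be secured for any of this to be meaningful is that the right-hand side is a genuine amalgamated product, i.e.\ that $C=\langle abc,bca,cab\rangle$ is a free group of rank $3$ inside $\langle a,b,c\mid a^2=b^2=c^2=1\rangle\cong\Z_2\ast\Z_2\ast\Z_2$, so that $x\mapsto abc,\,y\mapsto bca,\,z\mapsto cab$ is an isomorphism $F(x,y,z)\xrightarrow{\sim}C$. This is the crux and the step I expect to be the main obstacle. I would prove it by a normal-form analysis of the six words $x^{\pm1},y^{\pm1},z^{\pm1}$ in the free product. The decisive observation is that each of these words has a distinguished middle letter ($b$ for $x^{\pm1}$, $c$ for $y^{\pm1}$, $a$ for $z^{\pm1}$), and that these middle letters are never cancelled: two adjacent syllables from the same family must be equal (e.g.\ $xx$ or $x^{-1}x^{-1}$) and their junction exhibits no cancellation at all, whereas two adjacent syllables from different families may cancel their outer letters but carry distinct middle letters and so cannot cancel those. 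Consequently the image in $\Z_2\ast\Z_2\ast\Z_2$ of any nonempty reduced word $g_1\cdots g_n$ in $x,y,z$ contains the $n$ middle letters, in order, as an uncancelled subword, hence is a reduced word of length at least $n>0$ and is nontrivial. This establishes that $x,y,z$ freely generate; with the lemma in hand the amalgamated product is well defined and the two presentations coincide, completing the proof.
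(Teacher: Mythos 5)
Your proof is correct and follows essentially the same route as the paper's: both pass between the two presentations by Tietze transformations, rewriting $(wd)^2=1$ as $dwd=w^{-1}$ using $d^2=1$ and then eliminating $x,y,z$ via the amalgamation relations $x=abc$, $y=bca$, $z=cab$. Your one substantive addition is the middle-letter argument showing that $abc,bca,cab$ freely generate a rank-$3$ subgroup of $\Z_2\ast\Z_2\ast\Z_2$ --- a fact the paper asserts without proof when writing $C=F(x,y,z)$, which is genuinely needed for the amalgamated product to be well defined, and which your normal-form analysis establishes correctly.
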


\begin{proof}
We will define two subgroups of $G_4^3$:		
$$A =  \langle a,b,c \,|\, a^2=b^2=c^2=1 \rangle,$$ $$B =  F(x,y,z) \leftthreetimes \langle d \,|\, d^2=1 \rangle.$$
The presentation of the group $A\freeprod_C B$ with $A$ and $B$ defined above is:
$$ \langle a,b,c,x,y,z,d \,|\, a^2=b^2=c^2=1, d^2=1,$$ $$dxd=x^{-1}, dyd=y^{-1}, dzd=z^{-1},
x=abc, y = bca, z = cab \rangle.$$
Replacing $x,y,z$ with $(abc), (bca), (cab)$ in the relations $dxd=x^{-1}, dyd=y^{-1}, dzd=z^{-1}$ we obtain the relations $$dabcd=cba, dbcad=acb, dcabd=bac.$$ 
Now we can eliminate $x,y,z$ and come to a presentation:
$$\langle a,b,c,d|a^2=b^2=c^2=1, d^2=1, dabcd=cba, dbcad=acb, dcabd=bac\rangle.$$
It is easy to see that this presentation is equivalent to the standard presentation $G_4^3$.
\end{proof}

The book \cite{LynSh} contains the following theorem: 

\begin{theorem} \label{th:LyndonSchupp}
Let $P=D\freeprod_R E$, and let $u=c_1\dots c_n$ be a cyclically reduced element of $P$ where $n \geq 2$. Then every cyclically reduced conjugate of $u$ can be obtained by cyclically permuting $c_1\dots c_n$ and then conjugating the result by an element of the amalgamated part $R.$	
\end{theorem}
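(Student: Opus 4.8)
The plan is to deduce this from the Normal Form Theorem for amalgamated free products, which I would invoke as the basic tool. Recall that every element of $P = D \freeprod_R E$ has a \emph{reduced} expression $g = t_1 t_2 \cdots t_s$ in which the syllables $t_i$ lie alternately in $D$ and $E$, none of them in $R$ (unless $s=1$), and that the syllable length $|g|=s$ is well defined, two reduced expressions of the same element differing only by insertion of $R$-elements at the junctions. An element $u = c_1 \cdots c_n$ of length $n \geq 2$ is cyclically reduced exactly when $c_1$ and $c_n$ lie in \emph{different} factors, so that reading the word around a circle produces no cancellation. My strategy is a minimal-conjugator argument: among all pairs $(w,g)$ with $w$ a cyclically reduced cyclic permutation of $u$ and $gwg^{-1}=v$, I would pick one with $|g|$ minimal and show the minimum is $|g|=0$.

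To locate the cancellation, write $w = c_1 \cdots c_n$ (a cyclic permutation of $u$, again cyclically reduced) and $g = t_1 \cdots t_s$ in reduced form, and consider $g w g^{-1} = t_1 \cdots t_s\, c_1 \cdots c_n\, t_s^{-1} \cdots t_1^{-1}$. Since the outermost syllables $t_1$ and $t_1^{-1}$ sit in the same factor, if no amalgamation occurred anywhere this word would be reduced of length $2s+n \geq 2$ with its first and last syllables in the same factor, hence \emph{not} cyclically reduced; as $v$ is cyclically reduced, amalgamation must occur. Because $g$ and $w$ are individually reduced, the only junctions where it can begin are the inner ones, $t_s c_1$ and $c_n t_s^{-1}$.

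The atomic computation is now as follows. As $w$ is cyclically reduced, $c_1$ and $c_n$ lie in different factors, so the innermost syllable $t_s$ shares a factor with exactly one of them. Suppose it shares the factor of $c_1$. If $t_s c_1 \notin R$ the syllable count does not drop and the resulting word again ends in the factor of $t_1$, so $v$ could not be cyclically reduced; hence $t_s c_1 = r \in R$. Then $t_s\, c_1 \cdots c_n\, t_s^{-1} = r\,(c_2 \cdots c_n c_1)\, r^{-1}$, so conjugating $w$ by $t_s$ produces an $R$-conjugate of the cyclic permutation $c_2 \cdots c_n c_1$ of $w$. Consequently $g w g^{-1} = (t_1 \cdots t_{s-1} r)\,(c_2 \cdots c_n c_1)\,(t_1 \cdots t_{s-1} r)^{-1}$, and since $r \in R$ is absorbed into $t_{s-1}$ without changing the syllable count, $t_1 \cdots t_{s-1} r$ is a conjugator of length $s-1$ from a cyclic permutation of $u$ to $v$ --- contradicting minimality. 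The case where $t_s$ shares the factor of $c_n$ is symmetric and yields the opposite cyclic shift. Therefore the minimum is $|g|=0$, i.e. $g \in R$, and at the minimum $v = g w g^{-1}$ with $w$ a cyclic permutation of $u$ and $g \in R$, which is exactly the assertion.

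I expect the main obstacle to be the $R$-coset bookkeeping underlying every step above: one must use the Normal Form Theorem to justify both that absence of amalgamation forces a word of length $2s+n$ whose ends lie in one factor, and that a syllable product falling into $R$ is transported across the neighbouring syllable without increasing the length, so that each reduction corresponds to precisely one cyclic shift modulo an element of $R$. Setting up the minimization over \emph{all} cyclic permutations of $u$ simultaneously (rather than over $u$ alone) is the device that keeps the reduced configuration inside the competition, and the final bookkeeping that composes the accumulated cyclic shifts and $R$-conjugations into the single clause ``cyclic permutation followed by conjugation by an element of $R$'' is routine once one observes that $R$-conjugation preserves cyclic-reducedness.
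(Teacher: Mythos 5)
Your proof is correct: the paper does not prove this statement at all --- it is quoted verbatim from Lyndon and Schupp \cite{LynSh} --- and your minimal-conjugator induction based on the Normal Form Theorem for $D\freeprod_R E$ is precisely the standard textbook argument for it. (One wording quibble: when $t_sc_1\notin R$ the syllable count \emph{does} drop by one, from $2s+n$ to $2s+n-1$, but it remains at least $2$ with both extreme syllables in the factor of $t_1$, which is what your contradiction with cyclic reducedness actually requires.)
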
			
				
The main result of the present section is the following theorem:	

\begin{theorem} \label{thm:alg_conj_g43}
There exists an algorithm with input consisting of two words from $G_4^3 \setminus C$ which returns 1, if these words are conjugated in $G_4^3$, and returns 0, if not.
\end{theorem}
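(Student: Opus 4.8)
The plan is to read off the answer from the amalgamated decomposition $G_4^3 = A \freeprod_C B$ established above together with the Lyndon--Schupp conjugacy criterion (Theorem~\ref{th:LyndonSchupp}). The structural fact I would exploit throughout is that $C$ is \emph{normal} of finite index in each factor: the generators $abc,bca,cab$ have a common image in the abelianization $\Z_2^3$, so $C$ sits inside the index-$4$ kernel of $A\to \Z_2^3/\langle(1,1,1)\rangle\cong \Z_2\times\Z_2$, and a Reidemeister--Schreier computation identifies $C$ with that kernel; in $B=F(x,y,z)\leftthreetimes\langle d\rangle$ the subgroup $C$ is the normal factor of index $2$. Hence $C\triangleleft G_4^3$ with $G_4^3/C\cong(\Z_2\times\Z_2)\ast\Z_2$. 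Two consequences are immediate and will serve as subroutines: membership in $C$ is decidable (project to the finite quotients $A/C$ and $B/C$), and the word and conjugacy problems in $A=\Z_2\ast\Z_2\ast\Z_2$ and in $B$ are solvable with \emph{computable} conjugators and centralizers, both groups being virtually free. The first move is then routine: bring each input to a cyclically reduced normal form for the amalgam, whose syllable length $n$ is a conjugacy invariant, splitting the problem into the elliptic case ($n\le 1$, i.e.\ conjugate into a factor) and the hyperbolic case ($n\ge 2$).

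For the elliptic case I would argue on the Bass--Serre tree of the amalgam. Because the inputs lie in $G_4^3\setminus C$ and $C$ is normal, nothing in sight is conjugate into $C$, so an elliptic input has a representative genuinely in $A\setminus C$ or in $B\setminus C$. Such an element $\bar u\in A\setminus C$ fixes the base vertex $v_0$ but, since every edge-stabilizer at $v_0$ equals $C$ by normality and $\bar u\notin C$, it fixes no incident edge; thus $\mathrm{Fix}(\bar u)=\{v_0\}$. Consequently any $g$ with $g\bar u g^{-1}=\bar v\in A$ must satisfy $g^{-1}v_0=v_0$, i.e.\ $g\in A$, so conjugacy in $G_4^3$ collapses to conjugacy inside the factor $A$ (symmetrically for $B$), which is decidable. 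Representatives landing in different factors can never be conjugate, the two vertex types lying in distinct $G_4^3$-orbits, and this is already detected in $G_4^3/C$.

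For the hyperbolic case I would apply Theorem~\ref{th:LyndonSchupp} verbatim: if $u=c_1\cdots c_n$ and $v$ are cyclically reduced with $n\ge 2$, then $u\sim v$ iff some cyclic permutation of $u$ is carried onto $v$ by conjugation by an element $\gamma\in C$. There are only $n$ permutations to try, and each test is effective precisely because $C$ is normal: conjugation by $\gamma$ is syllable-wise, $\gamma u\gamma^{-1}=(\gamma c_1\gamma^{-1})\cdots(\gamma c_n\gamma^{-1})$, so by uniqueness of the normal form the test reduces to the \emph{simultaneous} conjugacy problem ``find $\gamma\in C$ with $\gamma c_j\gamma^{-1}=c_j'$ for all $j$''. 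Syllable by syllable this produces, in the appropriate factor, a coset $g_jZ(c_j)$ of conjugators; intersecting each with the finite-index subgroup $C$ and then intersecting the resulting cosets across $j$ inside $C$ decides the test.

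The hard part will be making this last intersection genuinely algorithmic. Each centralizer $Z(c_j)$ in the virtually free factor is virtually cyclic and computable, but one must then intersect finitely many cosets of finitely generated subgroups inside the free group $C=F(x,y,z)$ (and with $C$ itself), for which I would invoke effective subgroup methods such as Stallings foldings; verifying that these intersections, together with the finite index of $C$ in both factors, always yield a terminating decision is the crux of the argument. I would finally remark that the hypothesis $G_4^3\setminus C$ is exactly what keeps us out of the one genuinely harder regime, namely the conjugacy of elements of the edge group $C$, which can be realised through distinct edges of the tree and is not governed by the clean single-vertex picture used above.
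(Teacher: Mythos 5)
Your elliptic case is fine and essentially matches the paper's (your Bass--Serre phrasing of ``edge stabilizers at $v_0$ all equal $C$ by normality, so $\mathrm{Fix}(\bar u)=\{v_0\}$'' is a clean repackaging of the paper's argument that conjugators of an element of $A\setminus C$ must lie in $A$). The gap is in the hyperbolic case, and it is twofold. First, the step ``by uniqueness of the normal form the test reduces to the simultaneous conjugacy problem $\gamma c_j\gamma^{-1}=c_j'$ for all $j$'' is false: in an amalgamated product $A\freeprod_C B$ the syllables of a reduced word are only determined up to shuffling elements of $C$ between adjacent positions, so $\gamma u\gamma^{-1}=v$ forces only $\gamma c_1\gamma^{-1}=c_1'\delta_1$, $\gamma c_2\gamma^{-1}=\delta_1^{-1}c_2'\delta_2$, \dots, $\gamma c_n\gamma^{-1}=\delta_{n-1}^{-1}c_n'$ for some unknown $\delta_1,\dots,\delta_{n-1}\in C$. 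You are therefore not facing a simultaneous conjugacy problem but a system of equations with $n$ extra unknowns ranging over $C$, and your centralizer-and-coset-intersection machinery does not apply as stated. Second, even for the (incorrectly simplified) problem you do pose, you explicitly leave the termination of the coset-intersection procedure as ``the crux of the argument,'' so the decisive step is missing rather than proved.

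The paper takes a different and cleaner route at exactly this point, which you may want to compare with. Rather than decomposing into syllables, it treats the candidate pair $(\widehat w,v)$ lying in one $C$-coset as whole elements and observes that for $h\in C$ one has $h\widehat wh^{-1}=\widehat w\,\varphi_{\widehat w}(h)h^{-1}$, where $\varphi_{\widehat w}$ is the automorphism of $C$ induced by conjugation (using $C\triangleleft G_4^3$). Hence $h\widehat wh^{-1}=v$ for some $h\in C$ if and only if the single equation $\widehat w^{-1}v=\varphi_{\widehat w}(h)h^{-1}$ has a solution $h$ in the free group $C=F(x,y,z)$ --- a twisted conjugacy problem, which is decidable in finitely generated free groups by \cite{Bogo} (the paper even gives the fixed-subgroup/core-graph algorithm in $C\ast\langle z\rangle$). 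To repair your argument you would either have to adopt this reduction or honestly solve the coset system with the $\delta_j$'s; as written, the hyperbolic case is not established.
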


It turns out, that the conjugacy problem in the group $G_4^3$ is closely related to the so-called {\em twisted conjugacy problem}:

\begin{definition} \label{def:twisted_conjugacy}
Let $\phi$ be an automorphism of a group $F$. It is said that {\em the $\phi$-twisted conjugacy problem is solvable in the group $F$} if for any elements $u, v \in F$, we can algorithmically decide whether there exists such $g$ in $F$ that $\phi(g^{-1})ug = v$. For example, the $id$-twisted conjugacy problem is the standard conjugacy problem in $F$. 

Moreover, it is said that {\em the twisted conjugacy problem is solvable in the group $F$} if the $\phi$-twisted conjugacy problem is solvable for any $\phi \in {\text Aut}(F).$
\end{definition}
	
\begin{remark}
The generalised word problem for $C$ in $G_4^3$ is solvable. Also it is proved that $C\triangleleft G_4^3$.
\end{remark}

\begin{proof}[Proof of Theorem \ref{thm:alg_conj_g43}]
Consider two words $v,w\in G_4^3\setminus C$.

1. First let us suppose that one of those words belongs neither to the subgroup $A$, nor to the subgroup $B$. Then we can note by Theorem \ref{th:LyndonSchupp} that if they are conjugated, then some cyclic permutation of the first word will belong to the coset $wC$ of the second one. Indeed, conjugation by an element of $C$ cannot change the coset. Thus all suitable cyclic permutations of the first word should belong the coset $wC$ or, equivalently, these permutations and the second word should have the same image in the quotient group $G_4^3 / C$ under the natural homomorphism.

This quotient group is isomorphic to $(\Z_2 \oplus \Z_2) \ast \Z_2$, since $A/C \cong \Z_2 \oplus \Z_2$ and $B/C \cong \Z_2$. The word problem is obviously solvable in this group. Now let us have two words from one $C$-coset: $v$ and $w$. We need to determine whether they are conjugated in the group $G_4^3$.
 
 Note that if $h$ is in $C$ then $$hwh^{-1} = (ww^{-1})hwh^{-1} = w h^w h^{-1} = w \varphi_w(h) h^{-1},$$ where $\varphi_w\colon C\rightarrow C$ is an exterior  automorphism (since $w$ does not belong to $C$) sending $h$ to $h^w$. $C$ is a normal subgroup $C\triangleleft G_4^3$, so $\varphi$ is indeed an automorphism. Thus if $hwh^{-1} =v$, then $w \varphi_w(h) h^{-1} = v$, so $w^{-1} v =  \varphi_w(h) h^{-1}$
 
 Therefore we have reduced the conjugacy problem of the words $v$ and $w$ in the group $G_4^3$ to the following question: whether there exists an element $h$ of the subgroup $C$ such that $w^{-1} v = \varphi_w(h) h^{-1}$. In other words, we need to solve a twisted conjugacy problem in the subgroup $C$.
	 
Note also that the group $C$ is a free group of finite rank (in fact, it is a group of rank 3). It is shown in \cite{Bogo} that the twisted conjugacy problem is solvable in finitely generated free groups. Therefore, in this case the conjugacy problem in the group $G_4^3$ is solvable.

2. Now let us consider the case, when at least one of the words $v,w$ lies either in the subgroup $A$ or the subgroup $B$. Without loss of generality let $v$ be a word from $A\setminus C$. Denote the cyclically reduced word for $v$ by $\tilde{v}$. A conjugate to the word $\tilde{v}$ is of the form $b_n^{-1}a_n^{-1}\dots b_1^{-1} a_1^{-1} \tilde{v} a_1 b_1 \dots a_n b_n$, where $a_i\in A, b_i\in B$. Suppose this word is cyclically reduced and $b_1 \neq 1$. 

Consider the word $\hat{v} = a_1^{-1}\tilde{v} a_1$. If $\hat{v}$ is an element of $C$, then $\tilde{v}$ is an element of $C$ as well, since $C$ is a normal subgroup of $G_4^3$. But that contradicts the assumption that $v\in A\setminus C$. Therefore $\hat{v}\notin C$. That means that we can cyclically reduce the word $b_n^{-1}a_n^{-1}\dots b_1^{-1} a_1^{-1} \tilde{v} a_1 b_1 \dots a_n b_n$ obtaining the word $\hat{v} = a_1^{-1}\tilde{v} a_1 \in A\setminus C$. That means that cyclically reduced elements conjugated to $v$ can only be elements of $A$ and conjugating elements have to belong to $A$ as well. 

Using the same arguments we deduce that cyclically reduced elements conjugated to an element of $B$ can only be elements of $B$ and conjugating elements have to belong to $B$ as well. That allows us to reduce the conjugacy problem for words from $A$ and $B$ in $G_4^3$ to the conjugacy problem in $A$ and $B$ correspondingly. It is obvious that the conjugacy problem in the subgroups $A$ and $B$ is solvable. 
\end{proof}

We proved that the conjugacy problem is solvable in the group $G_4^3$. We will go a bit further though, presenting in the next section the explicit algorithm of the conjugacy problem solution.
	 
\subsection{Algorithm of solving the conjugacy problem in $G_4^3$}

First, we present an algorithm solving the twisted conjugacy problem in the subgroup $C$. \\
	 
The input of the algorithm is the set $ \{u,v,\phi\}$, where $u,v$ are words from $C$, and $\phi$ is an automorphism of $C$.

We already have a free basis for $C$ and, adding a new letter $z$, we get a free basis for $C' = C\ast \langle z \rangle$. We will define some objects to help ourselves.

\begin{definition}
We say that the {\em extension of $\phi$}, which is denoted by $\phi' \in Aut(C')$, is defined by the formula $\phi'(z) = uzu^{-1}$, $\phi'(c)=\phi(c)$ for $c\in C$.
\end{definition}

Let us denote by $\gamma_{g}$ the inner automorphism of $C'$ defined by the formula $\gamma_{g}(h) = g^{-1}hg$ for any $h \in C'$. Furthermore, we introduce the notation $\gamma_g(\phi)$ for the composition of $\gamma_g$ and the extension $\phi'$ by the formula $$\gamma_g(\phi)(h)=\gamma_g(\phi'(h)).$$ $Fix(\gamma_{v}(\phi))$ denotes the set of fixed points of that mapping.

\begin{lemma}
The words $u$ and $v$ are $\phi$-twisted conjugated if and only if $Fix(\gamma_{v}(\phi))$ contains an element of the form $g^{-1}zg $ for some $g\in C$ (and, in this case, $g$ is a valid twisted conjugating element). 
\end{lemma}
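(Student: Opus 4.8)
The plan is to verify both implications simultaneously by computing $\gamma_v(\phi)$ applied to an element of the prescribed shape $g^{-1}zg$ (with $g\in C$) and determining exactly when it is fixed. Since $g\in C$, the extension acts by $\phi'(g)=\phi(g)$ and $\phi'(g^{-1})=\phi(g)^{-1}$, whereas $\phi'(z)=uzu^{-1}$ by definition; I would therefore first record
\[
\gamma_v(\phi)(g^{-1}zg)=v^{-1}\phi'(g^{-1}zg)v=v^{-1}\phi(g)^{-1}u\,z\,u^{-1}\phi(g)v.
\]
Introducing the abbreviations $p=v^{-1}\phi(g)^{-1}u$ and $q=u^{-1}\phi(g)v$, both of which lie in $C$, the condition $g^{-1}zg\in Fix(\gamma_v(\phi))$ becomes the single equation $p\,z\,q=g^{-1}zg$ in the free product $C'=C\ast\langle z\rangle$.

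Next I would analyse this equation using the normal form in the free product. On each side there is exactly one occurrence of the letter $z$, which belongs to the free factor $\langle z\rangle$ and not to $C$; hence uniqueness of the reduced form forces the $C$-syllables flanking $z$ to agree, giving $p=g^{-1}$ and $q=g$ (the degenerate cases such as $g=1$ collapse to the same conclusion). Either of these equalities rearranges to $\phi(g)^{-1}ug=v$, that is, $\phi(g^{-1})ug=v$, which is precisely the relation asserting that $g$ is a $\phi$-twisted conjugator carrying $u$ to $v$.

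Every link in this chain is an equivalence, so reading it from left to right proves the forward implication (a fixed element of the form $g^{-1}zg$ yields a twisted conjugator $g$), and reading it from right to left proves the converse (a twisted conjugator $g$ produces the fixed element $g^{-1}zg$), with the same $g$ in both directions. The only delicate point is the syllable-matching in the second paragraph, and I expect it to be the main thing to get right: it is valid precisely because $z$ was adjoined as a fresh free generator absent from $C$, which is exactly the reason for passing from $C$ to $C'$ in the first place. All the remaining steps are direct substitutions from the definitions of $\phi'$, $\gamma_g$, and $\gamma_v(\phi)$.
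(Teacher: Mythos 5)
Your proposal is correct and follows essentially the same route as the paper: both compute $\gamma_v(\phi)(g^{-1}zg)=v^{-1}\phi(g)^{-1}u\,z\,u^{-1}\phi(g)v$ and then exploit the free-product structure of $C'=C\ast\langle z\rangle$ to force $v^{-1}\phi(g)^{-1}u=g^{-1}$, the paper phrasing this as ``an element of $C$ commuting with $z$ must be trivial'' while you match syllables in the normal form directly. The two formulations are interchangeable, and your handling of the degenerate case $g=1$ is adequate.
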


\begin{proof} 
In fact, suppose that $v = (\phi(g))^{-1}ug$ for some $g\in F$. A simple calculation shows that $g^{-1}zg$ is then fixed by $\gamma_{v}(\phi)$. Conversely, if $g^{-1}zg$ is fixed by $\gamma_{v}(\phi)$ for some $g\in F$, then $gv^{-1}(\phi(g))^{-1} u$ commutes with $z$.  And this implies $gv^{-1}(\phi(g))^{-1} u  =  1$, since this word contains no occurrences of $z$ and we work in the group $C'=C\ast \langle z \rangle$. Hence, $v = (\phi(g))^{-1}ug$ so $u$ and $v$ are $\phi$-twisted conjugated (with $g$ being a twisted conjugating element). 
\end{proof}

We can algorithmically find a generating set for $Fix(\gamma_{v}(\phi))$, we can also decide if this subgroup contains an element of the form $g^{-1}zg$ for some $g\in F$. One can, for example, look at the corresponding (finite) core graph for $Fix(\gamma_{v}(\phi))$ (algorithmically computable from a set of generators) and see if there is some loop labelled $z$ at some vertex connected to the base-point by a path not using the letter $z$. And, if this is the case, the label of such a path provides the $g$, i.e. the required twisted conjugating element. \\
	 
Now we have all the necessary instruments to describe an algorithm of solving conjugacy problem for words in $G_4^3$.
	 
The input of algorithm is the set $ \{w,v\}$, where $w,v$ are words from $G_4^3.$ If either of those words lies in the subgroup $A\cup B$, then the algorithmic solution is obvious. Now let the words $w,v \in G_4^3\setminus (A\cup B)$. In that case the algorithm is as follows.

\begin{enumerate}
\item Write down all cyclic permutations of $w$.

\item For any permutation $\widehat{w}$ check if it is in the $C-$coset $vC$, using the algorithm of solving the word problem in $G_4^3/C \cong (\Z_2 \oplus \Z_2) \ast \Z_2$. If no, this permutation does not fit. If yes, go to step 3.

\item For permutation $\widehat{w}$ we define the automorphism $\varphi_{\widehat{w}}\in Aut(C)$, defined this way: $\varphi_{\widehat{w}}: h\rightarrow h^{\widehat{w}}$.

\item Define the group $F = C \ast \langle z \rangle$, define the automorphism $\psi \in Aut(F)$ this way: its restriction to $C$ is $\varphi_{\widehat{w}}$ and $\psi(z)=z$.

\item For $\varphi_{\widehat{w}v{-1}}\circ \psi \in Aut(F)$ construct finite basis of group $Fix(\varphi_{\widehat{w}v{-1}}\circ \psi)$.

\item Having this finite basis of $Fix(\varphi_{\widehat{w}v{-1}}\circ \psi)$ construct its core-graph .

\item Check if there is a vertex with loop labeled $z$ connected with marked vertex with a path without $z$-labels.

\item If there is one then exists a word $h$ in $C$ with $w^{-1} v = 
\varphi_{\widehat{w}}(h) e h^{-1}$ where $e$ -- empty word, so empty word and $w^{-1} v$ are $\varphi_{\widehat{w}}$-conjugated. In this way $\widehat{w}$ and $v$ are also conjugated.

\item Otherwise, there is no such word.
\end{enumerate}

\section{The word problem for $G_5^4$}

Consider the group $G_5^4$ and its subgroup $H_4$ (see Section \ref{sec:h_k_and_geomtery}). As was remarked before in Section \ref{sec:h_k_and_geomtery}, if the word problem is solvable in the subgroup $H_k$, it is solvable in the group $G_{k+1}^k$ as well. Therefore we need to study the group $H_4$ to understand the solvability of the word problem in the group $G_5^4$. 

\subsection{Presentation of the group $H_4$}

The following lemma is central in the proof of algorithmic solvability of the word problem in the group $G_5^4$.

\begin{lemma} 
	The subgroup $H_4\subset G_5^4$ has a presentation 
	\begin{align*}
		\langle a,b,c,d \,|\, a^2=b^2=c^2=d^2=1, \, [(ab)^2,(cd)^2]&=1, \\
		[(ac)^2,(bd)^2]=1, \, [(ad)^2,(bc)^2]&=1\rangle.
	\end{align*}
	\label{lem:h_4}
\end{lemma}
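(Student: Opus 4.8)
The plan is to prove the two inclusions separately: first that the four generators $a=b_1,\dots,d=b_4$ of $H_4$ satisfy the listed relations (so that the abstractly presented group, call it $\Gamma$, surjects onto $H_4$), and then that there are no further relations.

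For the surjection, recall that in $G_5^4$ the tetrahedron relations assert exactly that the product of all five generators taken in any order is an involution; writing $e:=b_5=a_{2345}$ for the omitted generator, this reads $(e\,u)^2=1$, i.e.\ $e\,u\,e=u^{-1}$, for every product $u$ of $a,b,c,d$ each taken once. The relations $a^2=b^2=c^2=d^2=1$ are immediate, so I only need the three commutators. I would obtain $[(ab)^2,(cd)^2]=1$ by computing $e\,(ab)^2\,e$ in two ways, using the two factorisations $(ab)^2=(abcd)(dcab)=(abdc)(cdab)$ into full products. Applying $e\,u\,e=u^{-1}$ to each factor gives $e\,(ab)^2\,e = dc\,(ab)^{-2}\,cd$ from the first factorisation and $e\,(ab)^2\,e = cd\,(ab)^{-2}\,dc$ from the second; equating the two expressions and cancelling shows that $(ab)^2$ commutes with $(cd)^2$. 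The other two commutators then follow from the $S_4$-symmetry of the construction: the index permutations fixing $1$ permute $\{a,b,c,d\}$ arbitrarily while fixing $e$. This yields the surjection $\Gamma\twoheadrightarrow H_4$.

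For completeness I would mirror the amalgamated-product decomposition already used for $G_4^3$. Let $C\le\Gamma$ be the subgroup generated by the full products $B_\sigma=b_{\sigma(1)}\cdots b_{\sigma(4)}$, and set $B=C\rtimes\langle e\mid e^2=1\rangle$ with $e$ acting by $B_\sigma\mapsto B_\sigma^{-1}$. The claim is that $G_5^4\cong\Gamma *_C B$. Indeed, eliminating the generators $B_\sigma$ by Tietze transformations (each $B_\sigma$ is a word in $a,b,c,d$) turns the amalgam presentation into $\langle a,b,c,d,e\mid \mathrm{rel}(\Gamma),\,e^2=1,\,(e\,u)^2=1\ \text{for all full products }u\rangle$. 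Since the three commutators are consequences of the tetrahedron relations by the previous paragraph, and since every tetrahedron relation is, using only the involutivity of the generators, a cyclic rotation of one of the form $(e\,u)^2=1$, this presentation is exactly the standard one for $G_5^4$. As each factor of an amalgamated free product embeds into it, $\Gamma$ would then embed into $G_5^4$ with image $\langle b_1,\dots,b_4\rangle=H_4$, yielding $\Gamma\cong H_4$.

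The main obstacle is justifying that $\Gamma *_C B$ is a genuine amalgam, i.e.\ that the assignment $B_\sigma\mapsto B_\sigma^{-1}$ extends to a well-defined involutive automorphism of $C$, so that $C$ embeds into $B$. This is not automatic: unlike the case $k=3$, where the corresponding subgroup is free (of rank $3$) and $H_3$ is simply $\Z_2*\Z_2*\Z_2$, here $C$ already carries relations — note $(ab)^2,(cd)^2\in C$, so $[(ab)^2,(cd)^2]=1$ holds inside $C$ — and inversion on $\Gamma$ is only an anti-automorphism, agreeing with the desired covariant map on single $B_\sigma$ but not on their products. I would therefore either determine a presentation of $C$ inside $\Gamma$ and check that its relations are invariant under $B_\sigma\mapsto B_\sigma^{-1}$, or, more in the spirit of Section \ref{sec:realisation_n=k+1}, identify $H_4$ with $\pi_1(\tilde{C}'_5(\R P^{3}))$ via the realisation map and verify that the $2$-cells of that configuration space contribute precisely the three commutators and nothing more. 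Establishing this absence of further relations is, I expect, the crux of the argument; the rest is bookkeeping.
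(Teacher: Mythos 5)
The first half of your argument is correct and complete: from the tetrahedron relations in the form $eue=u^{-1}$ (with $e=b_5=a_{2345}$ and $u$ any product of $a,b,c,d$ each taken once), the two factorisations $(ab)^2=(abcd)(dcab)=(abdc)(cdab)$ do give $dc\,(ab)^{-2}\,cd=cd\,(ab)^{-2}\,dc$, hence $[(ab)^2,(cd)^2]=1$, and the $S_4$-symmetry permuting $b_1,\dots,b_4$ while fixing $b_5$ yields the other two commutators. This establishes the surjection $\Gamma\twoheadrightarrow H_4$ by a purely algebraic computation, which is arguably cleaner than anything in the paper.

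But the substance of the lemma is the other inclusion --- that there are no further relations --- and your proposal does not prove it; you say so yourself. The amalgam $\Gamma\ast_C B$ only yields the embedding $\Gamma\hookrightarrow G_5^4$ if $C$ embeds into $B=C\rtimes\langle e\rangle$, i.e.\ if $B_\sigma\mapsto B_\sigma^{-1}$ extends to an automorphism of $C=\langle B_\sigma\rangle\leq\Gamma$; and unlike the $k=3$ case, where $C\cong F(x,y,z)$ is free and the extension is automatic, here $C$ carries relations (e.g.\ $(ab)^2=(abcd)(dcab)$ and $(cd)^2$ lie in $C$ and commute), so the extension must actually be checked against a presentation of $C$ that you do not have. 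Knowing that conjugation by $e$ inverts the $B_\sigma$ inside $G_5^4$ does not help, since that presupposes $C$ injects into $G_5^4$ --- which is essentially what you are trying to prove. The paper closes this gap by the geometric route you mention only in passing: it fixes four points of $\R{}P^3$ spanning a projective tetrahedron, lets the fifth point move in the complement $M$ (whose fundamental group is generated by loops around the edges), and then glues copies of $M$ along the finitely many singular configurations where the four base points become coplanar; each gluing forces exactly one commutation relation between loops around a pair of opposite edges, and the claim that nothing else is imposed is read off from the classification of singular configurations. So your approach is genuinely different from the paper's, but it is missing its key step, and completing it would require either a presentation of $C$ inside $\Gamma$ or the configuration-space argument that the paper actually uses.
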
 

\begin{proof}
Consider five points in $\R{}P^3$. Consider a motion of 5 points and forbid the situation when three points lie on the same straight line. 

Now, since we study the group $H_4$, we may consider a motion of the following type: four point (say, $1,2,3,4$) are in general position and fixed, and the fifth point 5 moves around, see Fig.~\ref{tetrahedron}. Generators of the group $H_4$ correspond to the moments when some four points lie in the same plane. Therefore they are in one-to-one correspondence with triples of points defining the plane where in that moment lies the moving point 5. For example, the triple $(123)$ corresponds to the moment, when the points $1,2,3,5$ lie in the same plane. We may say that the point 5 moves in the manifold $M=\R{}P^3\setminus \{\hbox{tetrahedron}\,\, (1234)\}$. Here saying ``tetrahedron'' we mean the configuration of $\R{}P^1$.

\begin{figure}[t!]
\centering\includegraphics[width=350pt]{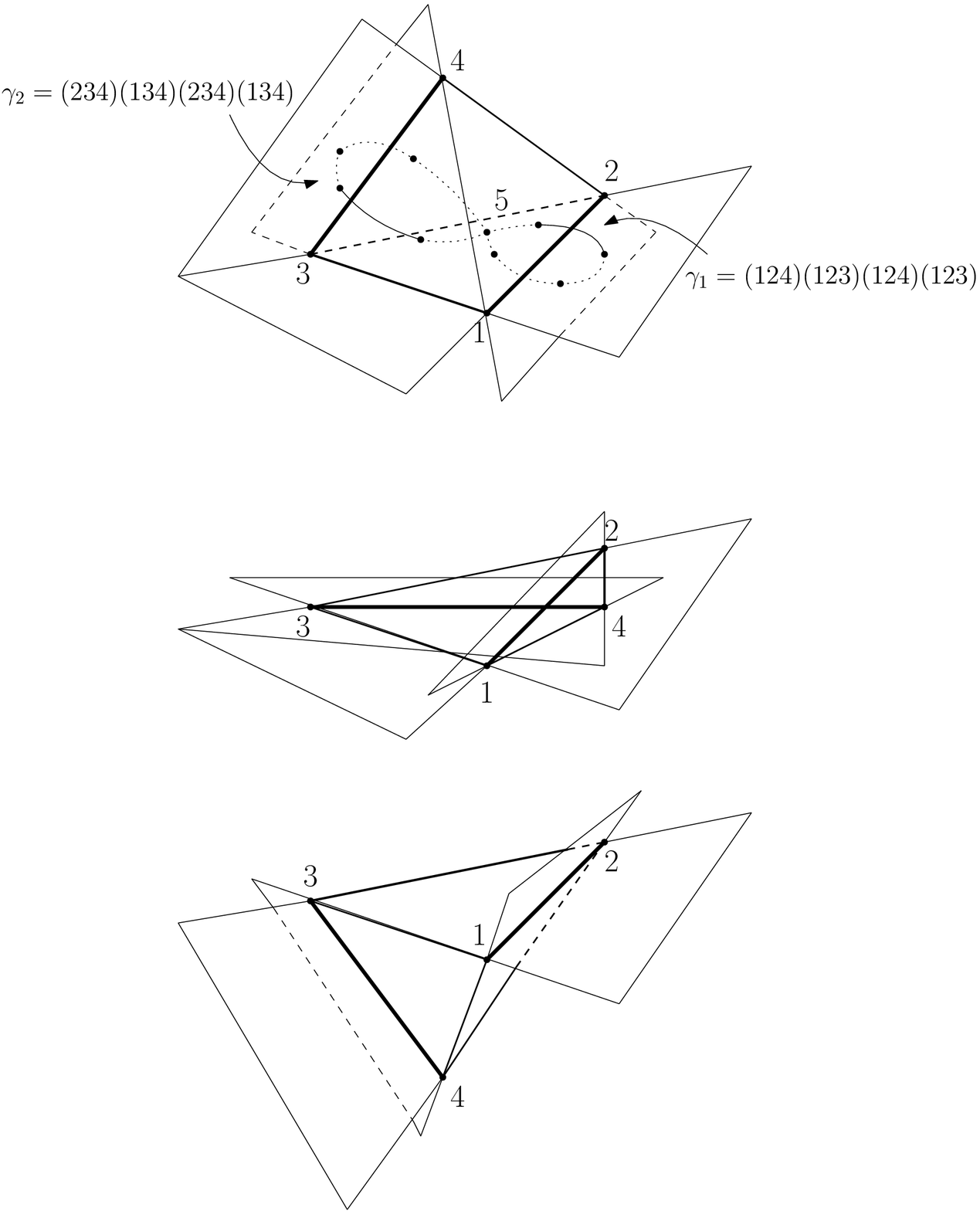}
\caption{The point 5 moves along the loops $\gamma_1$ and $\gamma_2$; when the point 4 moves, the tetrahedron becomes inverted}
\label{tetrahedron}
\end{figure}

Consider two loops $\gamma_1,\gamma_2$ depicted in Fig.~\ref{tetrahedron}, upper part. The fundamental group of the space $M$ is formed by such paths looping around the edges of the tetrahedron.

Now let us allow the points $1,2,3,4$ move. There is a finite number of singular moments when the four points lie on a plane and the tetrahedron transforms into a quadrilateral with crossing diagonals, see Fig.~\ref{tetrahedron}, middle part. Note that the tetrahedron lies in a projective space, not in Euclidian $\mathbb{R}^3$. At that point the forbidden set, which we cut from the manifold in which the point 5 moves, changes and becomes singular, denote the resulting manifold by $M_{\text{sing}}$.

If the points continue moving, the tetrahedron becomes ``inverted'', see Fig.~\ref{tetrahedron}, lower part. Thus two copies of the manifold $M$ become glued along the singular moment. This glueing may be imagined in the following way. Let us take the manifold $M$ and deform it into the manifold $M_{\text{sing}}$. We obtain a submanifold in $M\times [0,1]$ such that the manifold $M_{\text{sing}}$ is embedded into its boundary. Then we perform the same operation with the second copy of the manifold $M$ and glue the resulting manifolds along the submanifold $M_{\text{sing}}$.  

Now we can describe the space in which the point 5 moves: it is a manifold $\widehat{M}$ obtained by glueing together copies of the manifold $M$ along all singular moments of the movement of the point 5 as described above. Fig. ~\ref{tetrahedron} depicts two copies of manifold $M$ (upper and lower parts of the figure) and the manifold $M_{\text{sing}}$ along which they are glued (middle part of the figure).

Each singular moment produces a relation in the group $H_4$. Before the singular moment $t$ we had two loops: $\gamma_1$ and $\gamma_2$. After the singular moment $t$ we obtain two new loops $\gamma_1', \gamma_2'$, which go around the same edges of the tetrahedron as the loops $\gamma_1,\gamma_2$ (the edges $(12)$ and $(34)$ in the example in Fig.\ref{tetrahedron}). We have the following equalities: $$\gamma_1'=\gamma_1, \;\;\; \gamma_2'=\gamma_1^{-1}\gamma_2\gamma_1.$$ We consider the fundamental group of the manifold $\widehat{M}$. To obtain it, at each singular moment $t$ we glue two fundamental groups of the copies of the manifold $M$ and obtain the equality $\gamma_2'=\gamma_2$. Therefore the loops commute and we get the relation relation $$(124)(123)(124)(123)(234)(134)(234)(134)=$$ $$(234)(134)(234)(134)(124)(123)(124)(123).$$

Introducing the notation $a=(123), b=(124), c=(134), d=(234)$ we formulate the relation $$(baba)(dcdc)=(dcdc)(baba).$$

The relations of the form $a^2=b^2=c^2=d^2=1$ are evident. 

Taking that into account, we obtain the relations of the form $$(abab)(cdcd)=(cdcd)(abab).$$

Considering the loops around other pairs of opposite edges, we obtain the rest of the relations. Since there are no more singular configurations of the points movement, there are no other relations. The lemma is proved.
\end{proof}

\subsection{The Howie diagrams}

To study the group $H_4$ and the word problem in it we will need the so-called {\em Howie diagrams} --- a geometric way of representing group elements, in some aspects resembling the van Kampen diagrams (see, for example, \cite{howie}, where Howie diagrams are called {\em relative diagrams}). Just like in the case of van Kampen diagrams, one can consider disc diagrams, spherical diagrams, etc. In the present section we will only need disc Howie diagrams. 

To be more precise, the Howie diagrams are defined in the following way. First we define a {\em relative group presentation}. 

\begin{definition}
A {\em relative group presentation} is an expression of the form $\mathcal{P} = \langle G, x \,|\, R\rangle$, where $G$ is a group, $x$ is a set disjoint from $G$, and $R$ is a set of cyclically reduced words in the free product $G \ast \langle x \rangle$.

\label{def:relative_presentation}
\end{definition}

This notion was introduced by Bogley and Pride, see \cite{bogley}. Now let us fix a group $G$ and let the set $x$ be one-element. Consider a map $M$ on the plane and let the edges of $M$ (its 1-cells) be oriented. We will treat the faces (2-cells) of the map as polygons, and for each face $F$ saying a {\em corner of the face} we mean an intersection of a sufficiently small neighbourhood of a vertex of the face with the face $F$ itself.

Let us decorate every edge of the map with the letter $x$. Furthermore, we decorate every corner of every face of the map with a word $w\in G$. Now we can define {\em face-labels} and {\em vertex-labels} of the map $M$.

\begin{definition}
	Let $M$ be a map as described above. A {\em vertex-label} of a vertex $v$ is a word $\nu\in G$ obtained by reading all the corner labels while walking around the vertex $v$ in the clockwise direction.
	
	A {\em face-label} of a face $F$ of the map $M$ is a word $\nu \in G \ast \langle x \rangle$ obtained by reading all corner labels and edge labels while walking around the boundary of the face $F$ in the counterclockwise direction. As usual, if the orientation of an edge is compatible with the counterclockwise direction convention, we shall read $x$, otherwise we read $x^{-1}$.
	\label{def:diagram_labels}
\end{definition}

Now we have all the tools to define the Howie diagram.

\begin{definition}
	Let $K = \langle G,x \,|\, R\rangle$ be a relative group presentation with the set $x$ being one-element. A decorated map $M$ as described above is called a {\em Howie diagram} over that relative presentation, if the following conditions are satisfied:
	
	\begin{enumerate}
		\item for each interior vertex of the map its vertex-label is trivial in the group $G$;
		\item for every interior 2-cell $F$ of the map its face-label $w$ should lie in the set $R$ of the relations of the relative presentation $\langle G, x \,|\, R\rangle$.
	\end{enumerate}
	\label{def:howie_diagram}
\end{definition}

\begin{remark}
	Note that in \cite{howie} the relative diagrams are considered on the sphere and for that reason the corresponding maps have one vertex distinguished. Here we work with planar diagrams and may treat all vertices equally.
\end{remark}

The Howie diagrams are useful because the following analogue of the van Kampen lemma holds for them.

\begin{lemma}
	Let $H$ be a group, $t$ be a letter, and $G$ be a group given by the relative presentation $$G=\langle H,t \,|\, R\rangle.$$
	Let furthermore the natural mapping $\iota\colon H\to G$ be injective. Then for any word $w\in H\ast \langle t \rangle$ its image $\iota(w)$ is trivial in $G$ if and only if there exists a Howie diagram with single exterior cell over this presentation such that its exterior cell face-label is the word $w$.
	\label{lem:howie_van_kampen}
\end{lemma}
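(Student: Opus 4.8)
The statement is the relative-presentation analogue of van Kampen's lemma, so my plan is to follow the classical proof template, reading the planar diagram off against the free product $P = H \ast \langle t\rangle$ while letting the corner labels record elements of the base group $H$ (cf.\ Definitions \ref{def:diagram_labels} and \ref{def:howie_diagram}). The first step is purely algebraic: by the definition of the relative quotient $G = \langle H, t \mid R\rangle$, one has $\iota(w)=1$ in $G$ if and only if $w$ belongs to the normal closure $N(R)$ of $R$ inside $P$. Thus both implications of the lemma reduce to translating membership in $N(R)$ into the combinatorics of disc diagrams, exactly as in the free case but with the added bookkeeping of the $H$-valued corner labels.

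For the direction ``diagram $\Rightarrow$ triviality'' I would induct on the number of interior $2$-cells, at each stage removing a cell whose boundary meets the exterior region. Reading the boundary of the disc counterclockwise expresses the exterior face-label $w$ as a product of conjugates of the interior face-labels, the conjugating words being read off the connecting paths, while each time the reading passes an interior vertex it picks up exactly that vertex's vertex-label as a correction term. Since every interior face-label lies in $R$ and every interior vertex-label is trivial in $H$, after applying $\iota$ all these factors die in $G$, whence $\iota(w)=1$. This direction needs only that $\iota$ is a homomorphism.

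For the converse I would start from an explicit factorization $w = \prod_{i=1}^{n} g_i\, r_i^{\varepsilon_i}\, g_i^{-1}$ in $P$, with $r_i \in R$, $g_i \in P$ and $\varepsilon_i = \pm 1$, which exists precisely because $w \in N(R)$. From it I build a ``cactus'': for each $i$ a $2$-cell whose boundary spells $r_i^{\varepsilon_i}$ (its $t$-syllables becoming oriented edges, its $H$-syllables becoming corner labels) attached to a common basepoint by a stalk labelled $g_i$. The single outer boundary of this planar complex spells $\prod_i g_i\, r_i^{\varepsilon_i}\, g_i^{-1}$, which represents $w$ in $P$ but not yet letter-for-letter. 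The remaining, and principal, task is to fold this complex into a genuine disc diagram with a single exterior cell whose face-label is the reduced word $w$ itself: one repeatedly identifies pairs of boundary edges carrying inverse $t$-letters and multiplies together the $H$-corner labels trapped between them, checking after each move that the conditions of Definition \ref{def:howie_diagram} survive.

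The folding step is where I expect the real work, and it is also the place where the injectivity of $\iota$ enters. A pair of $t$-edges may be identified only when the $H$-word caught between them represents a trivial corner, i.e.\ is trivial in $H$; since free-product reduction in $P$ cancels $t\,t^{-1}$ precisely when the intervening $H$-syllable is trivial in $H$, the geometric folds run in lockstep with the algebraic reduction of the boundary word to normal form, and the interior vertices so created automatically acquire $H$-trivial vertex-labels. Injectivity of $\iota\colon H \to G$ guarantees that this $H$-valued calculus on the diagram faithfully mirrors the computation in $G$, so that the correspondence between the two sides of the lemma is exact. Combining the two directions completes the proof.
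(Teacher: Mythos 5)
The paper does not actually prove this lemma: it is stated without proof and imported from the literature (the references to \cite{howie} and \cite{bogley}, where such diagrams appear as ``relative diagrams''), so there is no in-paper argument to compare yours against. Your proposal is, in substance, the standard proof from those sources: reduce triviality of $\iota(w)$ to membership of $w$ in the normal closure $N(R)$ inside $H\ast\langle t\rangle$, prove the ``diagram $\Rightarrow$ trivial'' direction by peeling off boundary cells and observing that interior face-labels die because they lie in $R$ while interior vertex-labels die already in $H\ast\langle t\rangle$, and prove the converse by the cactus-plus-folding construction. That skeleton is correct. Two points deserve more care than your sketch gives them. First, the folding step, which you rightly identify as the crux, has the same degenerate cases as in the classical van Kampen lemma --- a boundary $t$-edge folded onto itself, or a fold that pinches off a spherical component --- and you should say how these are removed (delete the resulting spurs and cancelling pairs of cells; this only decreases the number of cells, so termination follows by induction). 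Second, your account of where injectivity of $\iota$ enters (``faithfully mirrors the computation in $G$'') is not an argument: neither direction as you describe it uses injectivity, because the vertex-labels produced by folding are trivial in $H$ by construction --- they arise from free-product cancellation in $H\ast\langle t\rangle$, not from any computation in $G$. Injectivity is really needed only to dispose of the degenerate case of a $t$-free word $w$ with $\iota(w)=1$ in $G$: such a $w$ must then already be trivial in $H$, since a diagram whose exterior face-label contains no letter $t$ can have no edges at all. You should isolate that case explicitly rather than appeal to a vague faithfulness principle.
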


\subsection{The solution to the word problem in $H_4$}

Now we shall use the Howie diagrams techniques and, in particular, Lemma \ref{lem:howie_van_kampen} to tackle the word problem in $H_4$. First, consider a subgroup $H_3\subset H_4$ generated by the letters $\{a,b,c\}$. It is isomorphic to $\langle a\rangle_2\ast\langle b\rangle_2\ast\langle c\rangle_2$. The group $H_4$ may be given by the following relative presentation: $$H_4=\langle H_3, d \,|\, R\rangle,$$ where the set of relations $R$ is naturally obtained from the presentation of $H_4$ given in Lemma \ref{lem:h_4}:
$$R = \{d^2=1, \;\; [(ab)^2,(cd)^2]=1, \;\; [(ac)^2,(bd)^2]=1, \;\; [(ad)^2,(bc)^2]=1\},$$
where the square brackets denote the usual commutator. By cyclic permutations we can transform those relation into such a form, that the letter $d$ is at the end of every relation:

\begin{enumerate}
	\item $d^2=1,$ 
	\item $(cababc)d(c)d(baba)d(c)d =1,$
	\item $(bacacb)d(b)d(caca)d(b)d=1,$
	\item $(acbcba)d(a)d(bcbc)d(a)d=1.$
\end{enumerate}

The inclusion mapping $\iota\colon H_3\to H_4$ is injective, so Lemma \ref{lem:howie_van_kampen} holds for this relative presentation. Therefore, a word $w$ is trivial in the group $H_4$ if and only if there exists a Howie diagram with single exterior cell (that is, a connected Howie diagram) with the word $w$ appearing as the face-label of the exterior face of the diagram. Let us denote this diagram by $D$.

Now we transform the diagram $D$ in the following manner. First, we reduce all the bigons corresponding to the relations $d^2=1$ replacing each bigon with vertices $v_1, v_2$  with an edge connecting the same vertices and oriented arbitrarily. After the removal of bigons, we ``forget'' the orientation of the edges. This operation is legal due to the relation $d^2=1$. Finally, we forget the label on the edges of the diagram, since all of them are equal to $d$. This operation does not change the vertex-labels, but transforms the face-labels. We will come back to that change later.

Thus we obtain an unoriented diagram $D'$ without bigonal cells. Furthermore, every relation of the set $R$ (except for the $d^2=1$ relations, which are no longer present in the obtained unoriented diagram) include exactly four subwords from the group $H_3$. That means that every interior face of the diagram $D'$ has four vertices. Note that the diagram $D'$ may not be homotopy equivalent to a disc. Nevertheless, it is connected and homeomorphic to a tree, some vertices of which are ``blown up'' into a submap homeomorphic to a disc.

Each cell of the map $D'$ corresponds to a relation of the set $R$. Moreover, there is no bigonal cells. That means, that the diagram has only cells depicted in Fig.~\ref{fig:types_of_cells}. We shall call them the {\em types} of the cells of the diagram.

\begin{figure}[t!]
\centering\includegraphics[width=300pt]{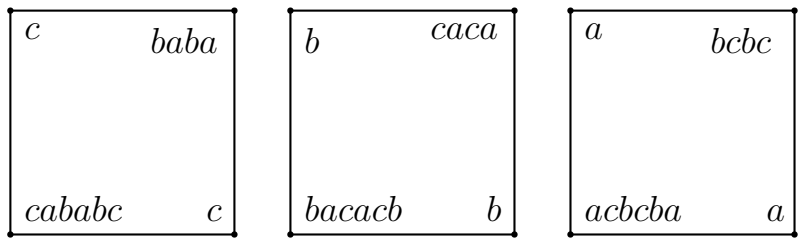}
\caption{Three types of cells of the diagram $D'$}
\label{fig:types_of_cells}
\end{figure}

Whenever two cells are glued together by two or more edges, at least one vertex becomes interior. By definition of the Howie diagram, the vertex-label of that vertex must be trivial in the group $H_3$. Consider the leftmost part of Fig.~\ref{fig:types_of_cells} which corresponds to the second relation of the set $R$. There are three different labels corresponding to a cell of that type: $c, baba$ and $cababc$. Among their pairwise concatenations there is only one word, trivial in $H_3$: the word $c^2$. Therefore, there are only two ways to glue together two cells of this type by at least two edges. Those situations are depicted in Fig.~\ref{fig:glueing_of_cells}. Analogous operations may be performed for two other types of cells. At the same time, no two cells of different types may be glued by two or more edges, because no trivial in $H_3$ vertex-label may be obtained this way.

\begin{figure}[t!]
\centering\includegraphics[width=300pt]{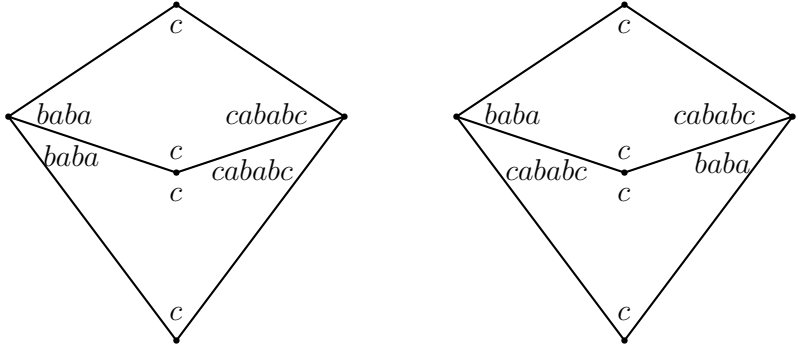}
\caption{Possible ways to glue together two cells of the first type by two or more edges}
\label{fig:glueing_of_cells}
\end{figure}

Now let us consider an interior vertex of the diagram $D'$.

\begin{statement}
	For every interior vertex $v$ of the diagram $D'$ there are either two or at least four cells of the diagram incident to that vertex.
\end{statement}

\begin{proof}
	The case of a vertex incident to exactly two cells was studied above. There are six possible glueings of two cells producing such vertices (two possible ways for each of three types of cells). Now let us prove that all other interior vertices are incident to at least four cells.
	
	Suppose there exists an interior vertex $v$ incident to exactly three cells. Then the vertex-label $l$ of vertex $v$ is of the form $l = l_1 l_2 l_3$, where each $l_i$ is from the set $W=\{a, b, c, baba, caca, bcbc, cababc, bacacb, acbcba\}$. The label $l$ must be trivial in the group $H_3$. Without the loss of generality we may say that it means that $l_1^{-1}=l_2 l_3$. It is easy to check that for all possible pairs of words of this set their product does not lie in the set $W$. Therefore the label $l$ cannot be trivial and there is no such vertex $v$.
\end{proof}

Let us apply one more transformation to the diagram $D'$. For each pair of cells glued together by two edges we replace them with one cell, deleting the two edges by which the cells are glued and replacing the labels in the corners of the cells with their products, see Fig.~\ref{fig:simplification}. We call this operation a {\em contraction} of cells.

\begin{figure}[t!]
\centering\includegraphics[width=300pt]{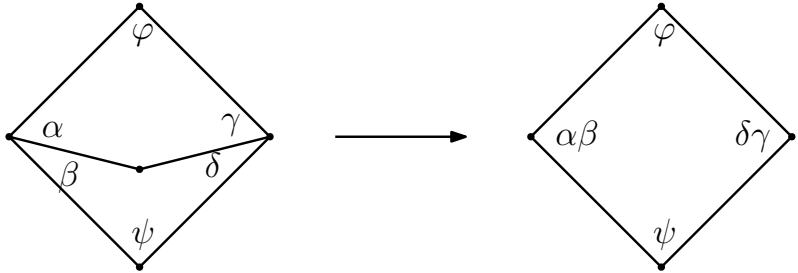}
\caption{Transforming two cells glued by two edges into one}
\label{fig:simplification}
\end{figure}

After this transformation is applied to all suitable pairs of cells, we obtain a decorated map, denote it by $M$. Note that now the face-labels are not necessary words from the set of relations $R$ and therefore the map may be not a Howie diagram. On the other hand, it is a map such that its every interior vertex is incident to at least four cells and every two cells are glued together by at most one edge. In other words, the map $M$ is a $(4,4)$-map in terms of the small cancellation theory.

Every cell of the map $M$ is obtained by contracting a chain of the cells of one type of the diagram $D'$. We extend the notation and say that a cell $F$ obtained by contraction of a chain of cells of type $i$ is of type $i$. Let us describe what kind of labels the cells of the map $M$ may have in their corners. Suppose that a cell $F$ is of the first type (other types are treated analogously). Then two opposite corners of the new cell contain the letter $c$ and the other two corners contain some words from the group $\langle cababc \rangle \ast \langle baba \rangle$. Moreover, if we know the label $u$ in a corner of this cell, we can uniquely determine the label $u'$ in the opposite corner. Indeed, if $u=c$, then $u'=c$ as well. If $u\in \langle cababc \rangle \ast \langle baba \rangle$, the opposite label may be obtained by replacing all occurrences of the word $baba$ in the label $u$ with $cababc$ and vice versa. 

To prove the main statement of the present subsection we shall use the following theorem (see \cite{LynSh}):

\begin{theorem}
	Let $M$ be a connected $(4,4)$-map. Then the number of 2-cells of the map does not exceed $$\frac{1}{4}\left(\sum_{F\subset M}[4-i(F)]^2\right),$$ where the sum is taken over all 2-cells of the map and $i(F)$ denotes the number of 2-cells incident to the cell $F$ (i.e. the cells with a common edge with $F$).
	\label{thm:lyndon}
\end{theorem}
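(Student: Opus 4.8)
The plan is to prove this as a combinatorial Gauss--Bonnet (Euler characteristic) estimate, the type of argument that underlies all $(p,q)$-map theorems in small cancellation theory. First I would reduce to the case of a simply connected (disk) diagram: since $M$ is connected and, as noted above, is obtained from disk pieces assembled in a tree pattern, the cell count $F_0$ and the weight $\sum_F[4-i(F)]^2$ both split over the disk pieces (cells in different pieces share no edges), so it suffices to establish the bound on each disk piece and add. Inside a disk piece every cell is a polygon, every interior vertex has degree at least $4$ by the $T(4)$ hypothesis, and two cells meet in at most one edge; the number $4-i(F)$ then records the deficiency of $F$ from being completely surrounded by four neighbours, and for a cell this is governed by the edges of $F$ lying on the boundary of the diagram.

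The engine is the following angle assignment: place the right angle $\pi/2$ at every corner. Then the combinatorial curvature of an interior vertex $v$ is $\tfrac{\pi}{2}(4-d(v))\le 0$ since $d(v)\ge 4$, and the curvature of an interior cell $F$ is $\tfrac{\pi}{2}(4-d(F))\le 0$ since $d(F)\ge 4$. By combinatorial Gauss--Bonnet the total curvature equals $2\pi\chi=2\pi$ for a disk, so all positive curvature is forced onto the boundary. Combining this identity with Euler's formula $V-E+F_0=1$ and the incidence count $\sum_F i(F)=2E_{\mathrm{int}}$ (each interior edge bounds exactly two cells) converts the curvature statement into a linear inequality relating $F_0$, the number of interior edges, and the total boundary deficiency $\sum_F[4-i(F)]$.

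From this ``nonpositive curvature'' information I would bootstrap to the quadratic bound by a layering/induction argument on the boundary: peel off the outer ring of cells, i.e. those with $i(F)<4$, observe that removing the ring decreases the boundary deficiency by a controlled amount while decreasing $F_0$ by at least the number of stripped cells, and close the induction. This is exactly the discrete isoperimetric inequality $\text{(area)}\lesssim\text{(boundary deficiency)}^2$ for Euclidean-type $(4,4)$-complexes, and the square in the statement $\tfrac14\sum_F[4-i(F)]^2$ is its signature.

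I expect the main obstacle to be this last bootstrap rather than the Gauss--Bonnet step: one must prevent many cells of small individual deficiency from accumulating in the interior, and track carefully how the boundary deficiency and the cell count change when a layer is removed --- in particular cells meeting the boundary in more than one edge, and interior vertices that become exterior after peeling. The genuinely non-disk, tree-of-disks shape of $M$, where peeling can disconnect the diagram, is where the accounting is most delicate, and it is where I would concentrate the effort.
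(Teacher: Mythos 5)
A preliminary remark: the paper offers no proof of this theorem at all --- it is imported from Lyndon--Schupp \cite{LynSh} (the Area Theorem for $(4,4)$-maps) --- so there is no in-paper argument to measure yours against. Your strategy --- split the tree-of-discs map into disc pieces, put the angle $\pi/2$ at every corner, observe that interior vertices and interior cells carry non-positive curvature, then peel boundary layers --- is the standard textbook route to this result. Two things, however, keep the proposal from being a proof.

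First, the inequality as displayed (and as you echo it in your closing paragraph) has the exponent in the wrong place, and in that form it is false. For the $3\times 3$ grid of unit squares, a $(4,4)$-map with $9$ cells, one has $\sum_F[4-i(F)]^2=4\cdot 2^2+4\cdot 1^2=20$ and $\frac14\cdot 20=5<9$. What is true, and what the paper actually uses two lines later when it writes $\frac14(4|w|_d)^2$, is the bound $\frac14\bigl(\sum_F[4-i(F)]\bigr)^2$, the square of the total boundary deficiency; your own phrase ``$(\text{area})\lesssim(\text{boundary deficiency})^2$'' refers to this square-of-the-sum version, not to the displayed sum of squares, and it is the former you must prove. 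Second, and more seriously, the entire quantitative content of the theorem is deferred. In the Euclidean case $(4,4)$ the combinatorial Gauss--Bonnet identity only forces the positive curvature onto the boundary, giving $\sum_F[4-i(F)]\ge 4$ for a nonempty disc piece; it places no bound whatsoever on the number of zero-curvature interior cells. The quadratic bound comes entirely from the layering induction, which rests on two non-obvious estimates that the proposal does not supply: (a) deleting the boundary layer leaves a $(4,4)$-map whose boundary deficiency has not increased (indeed has dropped by at least $4$, which is what bounds the number of layers), and (b) the number of cells in the boundary layer is controlled by the current deficiency --- delicate because a boundary cell with many interior edges has $4-i(F)\le 0$ and contributes nothing positive to the deficiency while still having to be counted in the layer. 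You explicitly flag this step as the one ``where I would concentrate the effort''; that is an accurate self-assessment, since it is where the whole theorem lives, and as written the proposal does not contain it.
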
 

Note that every interior cell of the map $M$ has at least 4 adjacent cells and thus does not yield a positive summand in that sum. Let us denote the cells with common edges with the outer boundary of the map by $D_i$ and their number by $N$. Each edge of the outer boundary of the map corresponds to a letter $d$ in the word $w$. Therefore the number $N$ cannot be greater than the number of letters $d$ in the word $w$. Thus the total number of the cells of the map $M$ cannot exceed $\frac{1}{4}(4|w|_d)^2=4|w|_d^2,$ where $|w|_d$ denotes the number of letters $d$ in the word $w$.

We have demonstrated that if a word $w\in H_4$ is trivial, then there exists a connected $(4,4)$-map $M$ decorated as described above such that the face-label of its exterior cell is the word $w$. As follows from Theorem \ref{thm:lyndon} the total number maps (without labels) that can be potentially transformed by labelling into the map $M$ with the word $w$ on its outer boundary is finite. Now we shall prove the following statement:

\begin{lemma}
	Let $w$ be a word in $H_4$ and let $M$ be a $(4,4)$-map homeomorphic to a disc with the number of cells not exceeding $4|w|_d^2$. Then there exists a finite constructive algorithm determining whether there is such a labelling of the corners of the cells of $M$ that vertex-labels of all interior vertices of the maps were trivial and the face-label of the exterior cell of the map $M$ was exactly $w$.
	\label{lem:disc_case}
\end{lemma}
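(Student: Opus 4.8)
The plan is to recast the existence of a valid labelling as the solvability of a finite system of equations in the free product $H_3=\langle a\rangle_2\ast\langle b\rangle_2\ast\langle c\rangle_2$, and then to bound the sizes of the labels so that the system can be decided by a finite search. First I would introduce one unknown per cell. By the structural analysis preceding the lemma, every cell $F$ of $M$ carries at its two opposite ``single-letter'' corners a fixed generator ($c$ for type $1$, $b$ for type $2$, $a$ for type $3$), while its two remaining opposite corners carry a word $u_F$ and its dual $u_F'$, where $u_F$ ranges over the rank-two free subgroup attached to the type of $F$ (for instance $\langle cababc\rangle\ast\langle baba\rangle$ for type $1$) and $u_F'$ is obtained from $u_F$ by the generator-swapping involution described above. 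Thus a labelling of $M$ is exactly a choice of one word $u_F$ per cell, and every corner label becomes an explicit word in the $u_F$ and the generators $a,b,c$.

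Next I would record the constraints. Parsing $w$ as an alternating product of $H_3$-syllables and letters $d$ fixes the cyclic sequence of boundary edges; matching it against the outer boundary of $M$ (discarding $M$ unless its number of boundary edges equals $|w|_d$, and trying each of the finitely many cyclic starting positions) fixes the exterior corner labels as prescribed syllables of $w$. Reading the face-label of the exterior cell then imposes, at each boundary vertex, that the clockwise product of the incident cell-corners equal the prescribed syllable (with the usual exterior-corner convention), while the Howie condition imposes, at each interior vertex, that the clockwise product of the incident cell-corners be trivial. Since $M$ has at most $4|w|_d^2$ cells, this is a finite system of equations over $H_3$ in the finitely many unknowns $u_F$, and a labelling of the required kind exists if and only if this system is solvable.

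Finally I would solve the system by peeling the simply-connected disc $M$ from its boundary inward. Because each cell contributes its unknown word at two opposite corners, that word appears in exactly two of the vertex-equations; processing the cells in an order in which each newly visited vertex has all but one of its incident corners already determined lets me solve for the remaining corner as an inverse of a product of known words, thereby pinning down (or discretely branching) the corresponding $u_F$, with the still-unused vertex-equations left as consistency checks at the end. The crucial point --- and the step I expect to be the main obstacle --- is to bound the lengths of the labels produced this way: a priori the repeated multiplications and reductions in $H_3$ could inflate some $u_F$ without bound. Here I would use the $(4,4)$ small-cancellation geometry of $M$ together with the fact that the word-corner alphabets of the three cell types ($cababc,baba$; $bacacb,caca$; $acbcba,bcbc$) cancel only in controlled ways, to show that every label in a solution has length bounded by a computable function of $|w|$ and of the bounded number of cells. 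Once such a bound is available, only finitely many labellings need be examined, each checkable by finitely many multiplications and reduced-word comparisons in $H_3$; running through them yields the desired finite constructive algorithm. As a fall-back, the finite system can instead be decided by appealing to the known decidability of equations over free products of finite cyclic groups.
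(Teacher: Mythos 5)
Your overall strategy --- peel the disc from its boundary inward, reduce to a finite search over labellings, and check each candidate by computation in $H_3$ --- matches the paper's, which proceeds by induction on the number of cells, repeatedly contracting a boundary cell into the exterior face while rewriting $w$. But your write-up leaves unproved exactly the step that carries the whole lemma: the bound on the lengths of the unknown corner labels. You explicitly flag it as ``the main obstacle'' and then only gesture at the $(4,4)$ small-cancellation geometry and ``controlled cancellation''; no bound is actually derived, so as written the finiteness of the search is not justified and the argument is incomplete.

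The paper closes this gap with a concrete local estimate. It picks a cell $F$ having two neighbouring exterior edges and examines the corner label $l$ between them. Either $l$ lies in the rank-two free product attached to the type of $F$, in which case $dld$ is literally a subword of $w$ and $l$ is read off directly; or $l$ is the single letter (say $c$), in which case the adjacent unknown corner label $u$ sits at an exterior vertex $v$, the vertex-label of $v$ is a syllable of $w$ (hence of length at most $|w|$), and only $K$ cells meet at $v$, each contributing a corner that can shorten the product by a bounded amount --- giving $|u|\le |w|+K$. Either way only finitely many candidates for the labels of $F$ remain; the cell is then absorbed into the exterior face and induction applies. Some version of this vertex-local length estimate is precisely what your peeling procedure needs and does not supply. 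Your fall-back --- appealing to decidability of systems of equations over free products of finite cyclic groups --- is also not a clean substitute: your unknowns are constrained to lie in prescribed subgroups and are linked in pairs by the generator-swapping involution $u\mapsto u'$, which is not itself a group equation in $H_3$, so invoking such results would require additional machinery (rational constraints, twisted equations) that you neither cite nor verify.
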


\begin{proof}
	The proof shall be by induction on the number of cells of the map. The general strategy is as follows: we shall change the word $w$ and the map $M$ by finding relations inside the word $w$ and removing them in such a way, that the number of cells in $M$ decreases.
	
	Let us call two edges of a map {\em neighbouring} if they have a common vertex. The map $M$ is finite, hence it has at least one cell with two neighbouring edges incident to the exterior cell of the map (such edges should also be called {\em exterior}). Let us take such a cell, denote it by $F$, and consider the label $l$ in the corner between the two neighbouring exterior edges. Let us consider the possible values of the label $l$. Without the loss of generality we should suppose that the cell $F$ is of the first type. \\
	
	1. $l$ could be a word from the group $\langle cababc \rangle \ast \langle baba \rangle$. Note that since $l$ is the label in the corner between two exterior edges, the word $w$ has a subword of the form $dld$. Therefore there is finite number of possible values of $l$. For each of them we can uniquely determine labels in all other corners of the cell $F$. Denote the label in the opposite corner by $l'$. Now we can change the word $w$ by replacing the subword $dld$ with a word $dldcdl'dc$ and get a new map $M'$ with fewer cells (effectively we contract the cell $F$ with the exterior cell of the map $M$ in the same manner as we contracted interior cells earlier). \\

	2. If $l=c$, then in the neighbouring corner the cell $F$ has a label $u\in \langle cababc \rangle \ast \langle baba \rangle$. The word $u$ uniquely determines the label $u'$ in the opposite corner. To perform the contraction of the cell $F$ with the exterior cell we need to prove that there are finitely many possible values of $u$.
	
	The corner with the label $u$ is incident to a a vertex which we shall denote by $v$. Only finite number of cells are incident to the vertex $v$, let us denote their number by $K$ (this number may easily be bounded considering the total number of cells of the map $M$). Note that $v$ is an exterior vertex, because it is an endpoint of an exterior edge. Therefore, the vertex-label of $v$ is a subword of the word $w$ and hence it cannot be longer than the word $w$. Now suppose that the length of the label $u$ is greater than $|w|+K$.
	
	The labels in the corners of the cells incident to the vertex $v$ may either equal $a, b$ or $c$ or lie in one of the groups $\langle cababc \rangle \ast \langle baba \rangle, \langle bacacb \rangle \ast \langle caca \rangle, \langle acbbca \rangle \ast \langle bcbc \rangle$ depending on the type of the corresponding cell. It is easy to see that with the addition of each label the length of $u$ may decrease at most by one. That means that $|l(v)|\ge |u| - (K-1) > |w|+K-K+1 = |w|+1$. That is impossible, therefore the length of $u$ cannot be greater than $|w|+K$.
	
	We proved that there is only finite number of possible values of $u$. For each of them we can uniquely determine the value of $u'$ and contract the cell $F$ with the exterior cell, obtaining a map with fewer cells. \\
	
	There are no other possibilities for the values of the label $l$. That means that if we take a cell with two neighbouring exterior edges, we may obtain finitely many maps with fewer cells. By induction we either find the decomposition of the word $w$ into a product of generators or prove that there is no required Howie diagram.
\end{proof}

Now we are ready to prove the main theorem of this section.

\begin{theorem}
	The word problem is algorithmically solvable in the group $H_4$.
\end{theorem}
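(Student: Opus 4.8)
The plan is to reduce the decision whether $w=1$ in $H_4$ to a finite search over combinatorial Howie diagrams, and then to dispatch that search using the lemmas already established. First I would normalise the input: an arbitrary element of $H_4$ is given by a word $w$ in the free product $H_3 \ast \langle d\rangle$, and by Lemma~\ref{lem:howie_van_kampen} (applicable since $H_3 \hookrightarrow H_4$ is injective) $w$ represents the identity if and only if there exists a connected Howie diagram over the relative presentation $H_4=\langle H_3,d \mid R\rangle$ whose exterior face-label is $w$. Since $H_3\cong \Z_2\ast\Z_2\ast\Z_2$ has a trivially solvable word problem (reduction to free-product normal form), all the auxiliary triviality tests in $H_3$ that arise below are effective. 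Thus the whole question becomes: does such a diagram exist?

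Next I would invoke the normalisation already carried out in this section. Reducing the $d^2$-bigons and contracting every pair of cells glued along two edges turns any such diagram into a decorated $(4,4)$-map $M$ in which each interior vertex meets at least four cells and distinct cells share at most one edge, and this process is reversible at the level of face-labels. By Theorem~\ref{thm:lyndon} together with the boundary count $N\le |w|_d$, the number of $2$-cells of $M$ is at most $4|w|_d^2$. Hence only finitely many underlying unlabelled maps can possibly carry a labelling with exterior face-label $w$, and they can be enumerated effectively; for each, the admissible corner labels lie in the explicit sets described above, so the labelling too is a finite object.

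The step I expect to be the main obstacle is that $M$ need not be homeomorphic to a disc: it is the ``blown-up tree'' described earlier, a tree of disc-blocks joined at cut vertices, whose bridge edges are read twice by the exterior boundary walk. I would handle this by decomposing $M$ along its cut vertices. The exterior face-label $w$ then factors, following the tree, into the boundary words of the individual disc-blocks separated by the doubly-read bridge letters $d$; each such factor is determined once the tree shape and the cut-vertex positions are fixed (again a finite choice). To each disc-block I apply Lemma~\ref{lem:disc_case}, which decides in finitely many steps whether that block admits a consistent corner-labelling realising its prescribed boundary word. The global diagram exists if and only if every block does and the labels glue consistently at the cut vertices, i.e.\ the concatenated corner labels around each cut vertex are trivial in $H_3$ --- a finite, effective check by the first paragraph.

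Assembling these, the algorithm is: enumerate the finitely many candidate maps of cell-count at most $4|w|_d^2$ together with their cut-vertex decompositions; for each candidate run the block-wise procedure of Lemma~\ref{lem:disc_case} and verify consistency at the cut vertices; declare $w=1$ precisely when some candidate succeeds. Termination follows from the finiteness of the search space and of each per-block computation, and correctness from Lemma~\ref{lem:howie_van_kampen}. The only genuinely delicate point is the bookkeeping of the tree-of-discs topology --- correctly factoring $w$ across bridges and reconciling the disc-only statement of Lemma~\ref{lem:disc_case} with the possibly non-simply-connected global map; once this is organised, the theorem follows by direct combination of the preceding results.
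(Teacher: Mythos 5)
Your proposal is correct and follows essentially the same route as the paper: reduce via Lemma~\ref{lem:howie_van_kampen} to a finite search over decorated $(4,4)$-maps whose cell count is bounded by Theorem~\ref{thm:lyndon}, exploit the tree-of-discs structure of the map, and dispatch each disc block with Lemma~\ref{lem:disc_case} together with a consistency check over the tree. The only cosmetic difference is the order of operations --- the paper reduces the disc submaps first and then distributes the resulting subwords over the tree vertices, whereas you factor $w$ along the bridge edges first and then run the disc lemma on each prescribed factor --- but the content is the same.
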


\begin{proof}
	Consider a word $w\in H_4$. As we have shown, it is trivial in $H_4$ if and only if there exists a $(4,4)$-map with labels in the corners of its cells such that the vertex-label of each interior vertex of the map is trivial in the group and the face-label of the exterior cell is the word $w$.
	
	As we noticed before, this map is a tree with a disc submap at each vertex. The total number of cells in the map is not greater than $4|w|_d^2$, the number of the edges of the tree is bounded by the same number. Therefore the number of such trees is finite.
	
	Now for each disc submap we may perform the reduction algorithm from Lemma \ref{lem:disc_case}. Reducing each disc submap in this manner we come down to a tree and some new word $\tilde{w}$. Letters $d$ break the word $\tilde{w}$ into a finite sequence of subwords $w_1, \dots, w_n$. To check whether the map admits a labelling giving the word $\tilde{w}$, we need to choose a starting vertex (that can be done in a finite number of ways) and place the labels $w_1, \dots, w_n$ into the vertices of the tree walking around it starting from the chosen vertex (when we reach a leaf of the tree, we shall go back along the same edge). That gives us a finite algorithm of determining whether the necessary labelling exists.
	
	Hence, we have constructed an algorithm which takes a word $w$ and returns ``true'' if there exists a labelled map with the word $w$ as the label of its outer cell, or ``false'' otherwise. Therefore the word problem in the group $H_4$ is solvable.
\end{proof}

As was remarked in Section \ref{sec:h_k_and_geomtery}, the solvability of the word problem in $H_k$ yields the solvability of the word problem in $G_{k+1}^k$. Thus we obtain the following theorem.

\begin{theorem}
	The word problem is algorithmically solvable in the group $G_{5}^4$.
\end{theorem}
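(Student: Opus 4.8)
The plan is to reduce the word problem in $G_5^4$ to the word problem in its subgroup $H_4$, which has already been solved in the previous theorem. Recall that Section \ref{sec:h_k_and_geomtery} establishes two key facts that make this reduction work: first, by Theorem \ref{kl}, the generalised word problem for $H_k$ in $G_{k+1}^k$ is solvable, meaning we can algorithmically decide whether a given word $w$ lies in the subgroup $H_k$ (and when it does, construct an equivalent word with no occurrence of the last generator $b_{k+1}$); second, the whole apparatus of the map $f\colon G_{k+1}^k\to F_{k-1}$ gives the only obstruction to membership in $H_k$. Specialising to $k=4$, the plan is to combine these two ingredients with the just-proved solvability of the word problem in $H_4$.

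Concretely, I would proceed as follows. Given a word $w$ in the generators of $G_5^4$, first apply the map $f\colon G_5^4\to F_3$ from Section \ref{sec:h_k_and_geomtery}. By the crucial lemma there, if $f(w)\neq 1$ in the free product $F_3=\Z_2^{*8}$ (and the word problem in a free product of cyclic groups is trivially solvable), then $w\notin H_4$; more is true, since $f$ detects the occurrences of the last letter $b_5$, a nonempty reduced image testifies that $w$ genuinely involves $b_5$ in an essential way. If instead $f(w)=1$, then by that same lemma $w\in H_4$, and the constructive procedure in its proof produces a word $\tilde w$ in the generators $b_1,\dots,b_4$ (equivalently $a,b,c,d$ of Lemma \ref{lem:h_4}) with $\tilde w = w$ in $G_5^4$. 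At that point I would feed $\tilde w$ into the word-problem algorithm for $H_4$ from the preceding theorem, which decides whether $\tilde w=1$.

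The one subtlety to address is that deciding $w=1$ in $G_5^4$ is not literally the same as deciding membership in $H_4$: the reduction must handle the case where $w$ is not in $H_4$. But here the argument is clean: the element $1$ obviously lies in $H_4$, so if $w\notin H_4$ then $w\neq 1$ immediately. Thus the algorithm is: compute $f(w)$; if $f(w)\neq 1$ then output that $w$ is nontrivial; if $f(w)=1$ then construct $\tilde w\in H_4$ equal to $w$ and run the $H_4$ word-problem algorithm on $\tilde w$, returning its answer. Each step terminates, so the composite procedure is a genuine algorithm.

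The main obstacle, honestly, is not in this reduction itself — which is essentially the remark already recorded in Section \ref{sec:h_k_and_geomtery} that solvability of the word problem in $H_k$ yields solvability in $G_{k+1}^k$ — but rather it has already been absorbed into the hard preceding theorem establishing the word problem for $H_4$ via the Howie diagram and $(4,4)$ small cancellation analysis. Here the only point requiring care is to verify that the two auxiliary algorithms we invoke (the membership/rewriting algorithm underlying Theorem \ref{kl} and the $H_4$ word-problem algorithm) are both fully constructive and produce the claimed outputs, so that their composition is well defined; granting the results of the excerpt, this is straightforward and the theorem follows.
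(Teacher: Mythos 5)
Your proposal is correct and follows essentially the same route as the paper: the paper's proof of this theorem is precisely the observation that the membership/rewriting algorithm of Theorem \ref{kl} (via the map $f\colon G_5^4\to F_3$) reduces the word problem in $G_5^4$ to the word problem in $H_4$, which was solved in the preceding theorem. Your explicit handling of the case $f(w)\neq 1$ (namely that $1\in H_4$, so $w\notin H_4$ forces $w\neq 1$) is exactly the point the paper leaves implicit.
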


\section{Further directions}

In the present paper we only considered the cases of the groups $G_4^3$ and $G_5^4$. A natural question is to consider all groups $G_{k+1}^k$ and tackle the word and conjugacy problems in those groups. The possible way to approach those problems is the following.

We have noted (see Section \ref{sec:h_k_and_geomtery}) that the word problem in the group $G_{k+1}^k$ essentially boils down to the same problem in the $H_k$ subgroup. We used this consideration to solve the word problem in $G_5^4$ by proving the existence of a certain presentation of the group $H_4$ which was useful for our purposes. It seems that the following conjecture should be true:

\begin{conjecture}
	For any group $G_{k+1}^k$ its subgroup $H_k$ admits the following presentation:
	
	$$H_k=\langle b_1,\dots, b_k \,|\, b_1^2=\dots=b_k^2=1, [(b_i b_j)^2, (b_m b_l)^2]=1\rangle$$
	
	where $\{i,j,m,l\}$ go over all possible sets of distinct numbers from $\{1,\dots, k\}$.
\end{conjecture}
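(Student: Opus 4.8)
The plan is to prove the conjecture by generalising the geometric argument of Lemma~\ref{lem:h_4} and then establishing \emph{completeness} of the resulting presentation by the Howie-diagram machinery developed for $G_5^4$. Throughout I index the generator $b_i$ of $H_k$ (for $i\in\{1,\dots,k\}$) by the fixed point it omits, so that, as in Lemma~\ref{lem:h_4}, $b_i$ records the moment at which the moving point $x_{k+1}$ crosses the projective hyperplane $P_i$ spanned by the $k-1$ fixed points $\{1,\dots,k\}\setminus\{i\}$. With this dictionary the key identification is
\[
(b_ib_j)^2=\mu_{ij},
\]
where $\mu_{ij}$ is the meridian of the codimension-two stratum $E_{ij}=P_i\cap P_j$, i.e. of the $(k-3)$-plane through the remaining $k-2$ fixed points: a loop encircling $E_{ij}$ crosses the two walls in the cyclic order $P_i,P_j,P_i,P_j$, which reads off as $b_ib_jb_ib_j$. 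For $k=4$ this is exactly the statement that $(ab)^2$ and $(cd)^2$ are the meridians of two opposite edges of the tetrahedron.

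First I would realise $H_k$ as the fundamental group of a glued configuration space, as in Lemma~\ref{lem:h_4} and Theorem~\ref{th0}. Fix the first $k$ points in general position and let $x_{k+1}$ move in the fibre $M=\R P^{k-1}\setminus\bigcup_{i<j}E_{ij}$, the complement of the codimension-two edge arrangement; then allow the first $k$ points to move, gluing two copies of $M$ at the codimension-one degeneration of the base, namely the moment when all $k$ fixed points lie on a single hyperplane and the simplex inverts. The relations $b_i^2=1$ are immediate. The commutator relations come from the monodromy of $\pi_1(M)$ around this inversion wall: as in the $H_4$ computation, where passing through the wall sends $\gamma_2\mapsto\gamma_1^{-1}\gamma_2\gamma_1$, the gluing trivialises the monodromy and forces meridians around ``opposite'' edges to commute, giving $[(b_ib_j)^2,(b_mb_l)^2]=1$ for disjoint pairs. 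Reading the monodromy against all choices of opposite-edge pairs (the three pairs $\{12\}\,\|\,\{34\}$, etc., when $k=4$) should produce all the listed relations, hence a surjection $\Gamma_k\twoheadrightarrow H_k$ from the group $\Gamma_k$ defined by the conjectured presentation.

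The hard part will be completeness, i.e. injectivity of $\Gamma_k\twoheadrightarrow H_k$, which splits into two sub-problems. First, one must show that $\pi_1(M)$ is generated by the meridians $\mu_{ij}$ subject only to the ``disjoint'' commutators; I would attempt this by induction on $k$, using a Fadell--Neuwirth-type projection that forgets one fixed point to relate the arrangement complement $M$ in $\R P^{k-1}$ to its analogue in $\R P^{k-2}$, and tracking how meridians lift. The genuine subtlety, absent when $k=4$, is that for $k\ge 5$ the edges $E_{ij}$ and $E_{ml}$ with $\{i,j,m,l\}$ distinct are \emph{not} disjoint but meet along the $(k-5)$-plane through the leftover points, so commutativity of their meridians is no longer the na{\"\i}ve ``unlinked'' statement and must be derived, while it must simultaneously be checked that configurations of three pairwise-meeting edges force no new relation. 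Second, one must compute the inversion monodromy explicitly and verify it contributes exactly the disjoint-pair commutators and nothing more, with Lemma~\ref{quadrisec} supplying the higher-dimensional homotopies needed to identify the monodromy in a basepoint-independent way. This pair of computations is where I expect the main obstacle to lie.

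As a parallel route, and the more likely path to a fully rigorous completeness argument, I would set up the inductive relative presentation
\[
H_k=\big\langle H_{k-1},\,b_k \;\big|\; b_k^2=1,\ [(b_ib_j)^2,(b_mb_k)^2]=1\ \text{for distinct } i,j,m\in\{1,\dots,k-1\}\big\rangle,
\]
and run the Howie-diagram analysis exactly as in the $G_5^4$ case. Granting the inductive hypothesis for $H_{k-1}$ and the injectivity of $H_{k-1}\hookrightarrow H_k$, Lemma~\ref{lem:howie_van_kampen} converts any relation in $H_k$ into a reduced connected diagram over these relators. Writing each relator with $b_k$ at the end gives it the same commutator shape as in Lemma~\ref{lem:h_4}, so after contracting $b_k^2$-bigons and merging cells one again obtains a $(4,4)$-map, and Theorem~\ref{thm:lyndon} bounds it and forces a trivial corner-labelling, showing every relation among the $b_i$ is a consequence of the listed ones. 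The crux here is to verify the $(4,4)$ small-cancellation condition uniformly in $k$: one must show no two distinct relators of the relative presentation share a subword long enough to create a vertex of valence below four, which for the words $(b_ib_j)^2$ and $(b_mb_k)^2$ reduces to a finite but genuinely $k$-dependent check on their overlaps.
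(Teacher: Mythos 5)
First, a point of calibration: the paper does \emph{not} prove this statement --- it is explicitly left as an open conjecture in the ``Further directions'' section, with only a one-sentence gesture toward the same two strategies you describe (geometric arguments as in Lemma~\ref{lem:h_4}, and algebraic ones). So there is no proof of record to compare against, and the question is whether your text itself constitutes a proof. It does not: it is a program whose two routes each terminate exactly at the steps you yourself label ``the main obstacle'' and ``the crux''. In the geometric route, neither the computation of $\pi_1$ of the complement of the $\binom{k}{2}$ codimension-two strata $E_{ij}\subset\R{}P^{k-1}$ nor the inversion monodromy is carried out; you correctly observe that for $k\ge 5$ the strata $E_{ij}$ and $E_{ml}$ with disjoint index pairs meet along a $(k-5)$-plane, so the commutators are no longer ``unlinked'' meridian statements, and triple intersections could in principle contribute relations beyond those listed --- but you then leave precisely this open. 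Even the surjectivity half inherits the paper's unproved assertion (already hand-waved in the $k=4$ case) that the simplex inversion is the only singular phenomenon producing relations.

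In the inductive Howie-diagram route there are two concrete failures beyond incompleteness. (i) You ``grant'' that $b_1,\dots,b_{k-1}$ generate a copy of $H_{k-1}$ inside $H_k$ and that this inclusion is injective; but the subgroup of $H_k\subset G_{k+1}^k$ generated by $b_1,\dots,b_{k-1}$ is not a priori isomorphic to $H_{k-1}\subset G_k^{k-1}$, and proving injectivity for the conjectured presentations is essentially of the same order of difficulty as the conjecture itself --- yet Lemma~\ref{lem:howie_van_kampen} is unusable without it, so the induction has no valid engine. (ii) The $k=4$ analysis depends pervasively on the coefficient group $H_3\cong\Z_2\ast\Z_2\ast\Z_2$ being a free product of involutions: triviality of vertex-labels was decidable by free reduction (only $c^2$-type concatenations are trivial, whence the $(4,4)$ property), opposite corner labels were determined inside free products such as $\langle cababc\rangle\ast\langle baba\rangle$, and the length bounds in Lemma~\ref{lem:disc_case} used free cancellation. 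For $k\ge 5$ the corner labels live in $H_{k-1}$, which by your own inductive hypothesis satisfies nontrivial commutator relations; for instance a vertex whose corners read $(b_ib_j)^2$, $(b_mb_l)^2$, $(b_jb_i)^2$, $(b_lb_m)^2$ has trivial label, and ruling out trivial labels at valence-three vertices, or recovering unique opposite-corner labels, now requires working modulo the relations of $H_{k-1}$ rather than in a free product. This is a genuinely different analysis, not the ``finite $k$-dependent check on overlaps'' you describe, and Theorem~\ref{thm:lyndon} gives you nothing until the $(4,4)$ property is actually established in this non-free setting. Until these points are resolved, what you have is a plausible restatement of the paper's own suggested program, with the obstacles well identified but not overcome.
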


The possible ways of proving this conjecture include both geometric arguments (as was done for the case of $H_4$ in the present paper) and algebraic considerations. If this conjecture holds, the word problem in $H_k$ (and, therefore, in $G_{k+1}^k$) should be solvable by the same techniques as were developed in the present section for $k=4$.

\end{document}